\newcommand{\comment}[1]{}
\newtheorem{theorem}{Theorem}[section]
\newtheorem{lemma}{Lemma}[section]
\newcommand{\footremember}[2]{%
    \footnote{#2}
    \newcounter{#1}
    \setcounter{#1}{\value{footnote}}%
}
\theoremstyle{remark}
\theoremstyle{definition}
\providecommand{\keywords}[1]{\textbf{\textit{key words:}} #1}
\begin{document}

\title{Large and moderate deviations for a discrete-time marked Hawkes process}
\author{Haixu Wang \footremember{hw}{Department of Mathematics, Florida State University, Tallahassee, FL 32306 \newline Email: hwang@math.fsu.edu}}
\date{}
\maketitle
\begin{abstract}
    Hawkes process is continuous-time stochastic model that captures temporal stochastic self-exciting phenomena. In particular, the linear Hawkes process has been well studied and widely used in practice because of mathematical tractability. However, in some contexts, a Hawkes model is not applicable because data is recorded in a discrete-time scheme or an aggregated way. Thus, a discrete-time Hawkes model is appealing for applications.
	In this paper, we study large and moderate deviations for a discrete-time marked Hawkes process first proposed in \cite{XZW2020}.
\end{abstract}
\small \keywords{discrete-time; marked Hawkes process; self-exciting; univariate; large deviations; moderate deviations}

\section{Introduction}
\label{sec:introduction}
Hawkes process is a self-exciting simple point process named after \cite{Hawkes}. Hawkes processes originate from statistical literature to model the occurrences of earthquakes and shocks after earthquakes, see \cite{VereJones}. In contrast to a standard Poisson process, the intensity of Hawkes process depends on its entire history, which can model the self-exciting or clustering effect. 
In finance, most applications of Hawkes processes are about high-frequency trading \cite{BH,VM}. Furthermore, Hawkes processes have been used to model credit default and the arrival of company defaults in a bond portfolio \cite{Eg,GGD}. Recently, Hawkes models have been applied in social networks. For example,  \cite{FSCB} modelled the rate of sending email for each officer at the West Point Military Academy. The more applications of Hawkes process can be found in seismology, neuroscience, cosmology, ecology, and epidemiology. For a list of references for these applications, see \cite{Bordenave, ZhuTh, Liniger}. 

Next, let us introduce the Hawkes process. Let $N$ be a simple point process on $\mathbb{R}$ and let $$\mathcal{F}^{-\infty}_t:=\sigma\left(N(C), C\in\mathcal{B}\left(\mathbb{R}\right), C\in(-\infty,t]\right)$$ be an increasing family of $\sigma$-algebras. Any non-negative $\mathcal{F}^{-\infty}_t$-progressively measurable process $\lambda_t$ with 
$$\mathbb{E}\left[N(a,b]|\mathcal{F}^{-\infty}_a\right]=\mathbb{E}\left[\int_{a}^{b}\lambda_s ds|\mathcal{F}^{-\infty}_a\right]$$
a.s. for all interval $(a,b]$ is called the $\mathcal{F}^{-\infty}_t$-intensity of N. $N_t:=N(0,t]$ denotes the number of points in the interval $(0,t]$. In general, a marked Hawkes process with intensity defined as
\begin{equation}
	\label{gli}
	\lambda_t := \lambda\left(\int_{(-\infty,t)\times\mathbb{X}}h(t-s,\ell)N(ds,d\ell)\right),
\end{equation}
where $\lambda(\cdot):\mathbb{R}^{+}\to\mathbb{R}^{+}$ is locally integrable and left continuous, $h(\cdot,\cdot):\mathbb{R}^{+}\times\mathbb{X}\to\mathbb{R}^{+}$ is integrable, $\ell$ denotes the mark variable, and $||h||_{L^1} = \int_{0}^{\infty}\int_{\mathbb{X}}h(t,\ell)q(d\ell)dt<\infty$. Here $\mathbb{X}$ is measurable space with common law $q(d\ell)$. $h(\cdot)$ and $\lambda(\cdot)$ are referred as exciting function and rate function, respectively.
Local integrability assumption of $\lambda(\cdot)$ ensures that the process is non-explosive and left continuity assumption ensures that $\lambda_t$ is $\mathcal{F}_t$-predictable. 
The integral in equation $\eqref{gli}$ stands for $\int_{(-\infty,t)\times \mathbb{X}}h(t-s,\ell)N(ds,d\ell)=\sum_{\tau_i<t}h(t-\tau_i,\ell_i)$, where $(\tau_i)_{i\ge1}$ are the occurrences of the points before time t, and the $(\ell_i)_{i\ge1}$ are i.i.d. random marks, $\ell_i$ being independent of previous arrival times $\tau_j$, $j\le i$.

When $\lambda(\cdot)$ is linear, it is called a linear Hawkes process. There were extensive studies on the stability, law of large numbers, central limit theorems, large deviations, Bartlett spectrum, etc. 
In particular, \cite{Bacry} proved the functional law of large numbers and the functional central limit theorems. \cite{Bordenave} derived large deviations of Hawkes process. For a survey on Hawkes processes and related self-exciting processes, Poisson cluster processes, affine point processes, etc., see \cite{DV}.

When $\lambda(\cdot)$ is nonlinear, it is known as a nonlinear Hawkes process. Because of the lack of immigration-birth representation and computational tractability, nonlinear Hawkes processes are much less studied. However, there were some efforts in this direction. A nonlinear Hawkes process
was first introduced by \cite{BM}. The central limit theorems, the large deviation principles for nonlinear Hawkes processes can be found in \cite{ZhuIV,ZhuI,ZhuII,ZhuIII}. 

Hawkes process can also be extended to the multivariate setting. For a survey of multivariate processes and a short history of Hawkes process, we refer to \cite{Liniger}.

In contrast to the continuous setting, in reality, the arrivals of events are often recorded in a discrete-time scheme. For example, the data is collected on a fixed phase or the data only shows the aggregate results. Continuous-time Hawkes processes can model the unevenly spaced the arrival of events in time, while modeling the evenly spaced events in time requires a discrete-time type model. Therefore, discrete-time Hawkes processes are appealing for certain applications. However, there are few works on discrete-time Hawkes type models.


\cite{XZW2020} proposed for the first time a discrete-time self-exciting and mutually-exciting model analogous to Hawkes process.
More recently, the discrete-time self-exciting model was also applied to study the infection and death of COVID-19 in \cite{BDKVJ2021}.
\cite{WZ} extended the model of \cite{XZW2020} in the univariate case and studied its limit theorems. 
Following the model in \cite{WZ}, 
let $\alpha(t):\mathbb{N}\rightarrow\mathbb{R}_{+}$
be a positive function on $\mathbb{N}$ and define $X_{0}=N_{0}=0$.
We define $\Vert\alpha\Vert_{1}:=\sum_{t=1}^{\infty}\alpha(t)$
as the $\ell_{1}$ norm of $\alpha$. 
Conditional on $X_{t-1},X_{t-2},\ldots,X_{1}$, 
we define $Z_{t}$ as a Poisson random variable
with mean
\begin{equation}\label{eq:lambda_t}
\lambda_{t}=\nu+\sum_{s=1}^{t-1}\alpha(s)X_{t-s},
\end{equation}
and define
\begin{equation}\label{eq:compoundpoisson}
X_{t}=\sum_{j=1}^{Z_{t}}\ell_{t,j},
\end{equation}
where $\ell_{t,j}$ are positive random variables
that are i.i.d. in both $t$ and $j$. 
Finally, we define $N_{t}:=\sum_{s=1}^{t}Z_{s}$
and $L_{t}:=\sum_{s=1}^{t}X_{s}$.

Throughout the paper, we assume that $\Vert\alpha\Vert_{1}\mathbb{E}[\ell_{1,1}]<1$.
It can be derived that the law of large numbers hold:
\begin{equation}
\label{eq:lln}
\lim_{t\rightarrow\infty}\frac{N_{t}}{t}=\mu:=\frac{\nu}{1-\Vert\alpha\Vert_{1}\mathbb{E}[\ell_{1,1}]},
\qquad
\lim_{t\rightarrow\infty}\frac{L_{t}}{t}=\Tilde{\mu}:=\frac{\nu\mathbb{E}[\ell_{1,1}]}{1-\Vert\alpha\Vert_{1}\mathbb{E}[\ell_{1,1}]},
\end{equation}
in probability as $t\rightarrow\infty$, 
and the central limit theorem also holds, see \cite{WZ}:
\begin{align}
&\frac{1}{\sqrt{t}}
\left(N_{t}-\frac{\nu t}{1-\Vert\alpha\Vert_{1}\mathbb{E}[\ell_{1,1}]}\right)
\rightarrow \mathcal{N}\left(0,\frac{\nu(1+\Vert\alpha\Vert_{1}^{2}\text{Var}(\ell_{1,1}))}{(1-\Vert\alpha\Vert_{1}\mathbb{E}[\ell_{1,1}])^{3}}\right),
\\
&\frac{1}{\sqrt{t}}
\left(L_{t}-\frac{\nu\mathbb{E}[\ell_{1,1}] t}{1-\Vert\alpha\Vert_{1}\mathbb{E}[\ell_{1,1}]}\right)
\rightarrow \mathcal{N}\left(0,\frac{\nu\mathbb{E}[\ell_{1,1}^{2}]}{(1-\Vert\alpha\Vert_{1}\mathbb{E}[\ell_{1,1}])^{3}}\right),
\end{align}
in distribution as $t\rightarrow\infty$
under the assumptions that $$\lim_{t\rightarrow\infty}\frac{1}{\sqrt{t}}\sum_{u=1}^{t-1}\sum_{s=1+u}^{\infty}\alpha(s)=0,$$
and the first four moments of $\ell$ are finite.

In this paper, we are interested in studying the large and moderate deviations for the above discrete-time marked Hawkes process.
Before we proceed, we will briefly review the large deviation principle, the moderate deviation principle, and the existing results for Hawkes models.

\textbf{Other related literature.}
A discrete-time Hawkes-type model with 0-1 arrivals
was proposed by \cite{Seol} and the limit theorems were studied. 
Let $\left(X_n\right)^{\infty}_{n=1}$ be a sequence taking values on $\{0,1\}$ defined as follows.
Let $\hat{\mathbb{N}}=\mathbb{N}\bigcup\{0\}$ and assume that for $i\in \mathbb{N}$, $\alpha_i > 0$ is a given sequence of positive numbers and $\sum_{i=0}^{\infty}\alpha_i<1$.
(i) $X_1=1$ with probability $\alpha_0$ and $X_1 = 0$ otherwise.
(ii) Conditional on $X_1,X_2,...,X_{n-1}$, we have $X_n = 1$ with probability $\alpha_0 + \sum_{i=1}^{n-1}\alpha_{n-i}X_i$, and $X_n = 0$ otherwise.
Define $S_n := \sum_{i=1}^{n}X_i$. \cite{Seol} showed a law of large numbers theorem, i.e.
$$\frac{S_n}{n} \to \mu := \frac{\alpha_0}{1-\sum_{i=1}^{\infty}\alpha_i},$$
in probability as $n\to\infty$. 
In addition, with assumption $\sqrt{n}\sum_{i=n}^{\infty}\alpha_i\to0$ as $n\to\infty$ and $\frac{1}{\sqrt{n}}\sum_{i=1}^{n}i\alpha_i\to0$ as $n\to\infty$, the central limit theorem follows:
$$\frac{S_n-\mu n}{\sqrt{n}} \to \mathcal{N}\left(0,\frac{\mu(1-\mu)}{\left(1-\sum_{j=1}^{\infty}\alpha_j\right)^2}\right),$$ 
in distribution as $n\to\infty$.

\subsection{Large deviation principles}\label{Def:LDP}
Following \cite{Dembo}, we introduce the definition of large deviation principle. A family of probability measures $\{\mathbb{P}_{n}\}_{n\in\mathbb{N}}$ on a topological space $\left(X,\mathcal{T}\right)$ satisfies the large deviation principle with rate function $I(\cdot):X\to[0,\infty]$ and speed $a_n$ if $I$ is a lower semi-continuous function, $a_n:[0,\infty)\to[0,\infty)$ is a measurable function which increases to infinity, and the following inequalities hold for every Borel set $A$:
\begin{equation*}
-\inf_{x\in A^{o}}I(x)\leq\liminf_{n\rightarrow\infty}\frac{1}{a_{n}}\log \mathbb{P}_{n}(A)
\leq\limsup_{n\rightarrow\infty}\frac{1}{a_{n}}\log \mathbb{P}_{n}(A)\leq-\inf_{x\in\overline{A}}I(x).
\end{equation*}
Where $A^{o}$ is the interior of $A$ and $\overline{A}$ is the closure of A. We say that the rate function $I$ is good if for any $m\ge0$, the level set $\{x\in X:I(x)\le m \text{, }m\ge0\}$ is compact.
In addition to \cite{Dembo}, we also refer to \cite{VaradhanII} for a survey on large deviations.

\subsection{Large deviations for Hawkes processes}\label{sec:LDHP}
We first review some large deviations results for Hawkes processes
in the literature. We recall that the intensity of a unmarked linear Hawkes process 
with empty past history, i.e. $N(-\infty,0]=0$, is given by 
\begin{equation}
\label{lli:0}
\lambda_t := \nu +\int_{(0,t)}h(t-s)N(ds)
\end{equation}
where $\nu>0$. The integral in equation $\left(\ref{lli:0}\right)$ stands for $\int_{(0,t)}h(t-s)N(ds)=\sum_{\tau_i<t}h(t-\tau_i)$, where $(\tau_i)_{i\ge1}$ are the occurrences of the points before time $t$. If $\Vert h\Vert_{L^1}=\int_{0}^{\infty}h(t)dt<1$, the linear Hawkes process has an immigration-birth representation, and by ergodic theory, the law of large numbers for the linear Hawkes process (see, for instance, \cite{DV}) is derived as 
\begin{equation*}
    \lim_{t\to\infty}\dfrac{N_t}{t}=\dfrac{\nu}{1-\Vert h\Vert_{L^1}}.
\end{equation*}

\cite{Bordenave} showed that, if $0<\Vert h\Vert_{L^1}<1$ and $\int_{0}^{\infty}th(t)dt<\infty$, then $\mathbb{P}(\frac{N_t}{t}\in\cdot)$ satisfies the large deviation principle on $\mathbb{R}$ with the good rate function: 
\begin{equation}
	I(x) = \begin{cases}
	x\theta_{x}+\nu-\frac{\nu x}{\nu +\Vert h\Vert_{L^1} x}, & \text{if $x\in[0,\infty)$},\\
	\nu , & \text{if x=0},\\
	+\infty, & \text{otherwise},
	\end{cases}
\end{equation}
where $\theta \in (-\infty,\Vert h\Vert_{L^1}-1-\log\Vert h\Vert_{L^1}]$ and $\theta = \theta_{x}$ is the unique solution of $\mathbb{E}[e^{\theta S}]=\frac{x}{\nu+x\Vert h\Vert_{L^1}}$, $x>0$, where $S$ denotes the total number of descendants of an immigrant, including the immigrant itself. 

The large deviation principle of a marked linear Hawkes process with empty history can be found in \cite{Karabash}. Recall the notation of a general marked Hawkes process introduced in section~\ref{sec:introduction}, The intensity of a marked univariate linear Hawkes process is given by
\begin{equation}
\label{lli}
\lambda_t := \nu +\int_{(0,t)\times\mathbb{X}}h(t-s,\ell)N(ds,d\ell).
\end{equation}
Let $H(\ell):=\int_{0}^{\infty}h(t,
\ell)dt$ for any $\ell\in \mathbb{X}$. Assume that
\begin{equation}
\label{Ha}
\int_{\mathbb{X}}H(\ell)q(d\ell)<1.
\end{equation}
Under the above assumption, there exists a unique stationary version of the linear marked Hawkes process defined by equation~\eqref{lli}. And by ergodic theorem, a law of large numbers is derived as
$$\lim_{t\to\infty}\dfrac{N_t}{t}=\dfrac{\nu}{1-\mathbb{E}^{q}[H(\ell)]}.$$
If there exists some $\theta>0$, so that $\int_{\mathbb{X}}e^{\theta H(\ell)}q\left(d\ell\right)<\infty$. \cite{Karabash} proved that $\mathbb{P}\left(N_t/t\in\cdot\right)$ satisfies a large deviation principle with rate function:
\begin{equation}
	I(x) = \begin{cases}
	\theta_\star x-\nu\left(x_\star-1\right), & x \ge 0,\\
	+\infty, & x < 0,
	\end{cases}
\end{equation}
where $\theta_\star$ and $x_\star$ satisfy the following equations
\begin{equation}
	\begin{cases}
	x_\star = \mathbb{E}^q\left[e^{\theta_\star+\left(x_\star-1\right)H\left(\ell\right)}\right],\\
	\frac{x}{\nu}=x_\star+\frac{x}{\nu}\mathbb{E}^q\left[H\left(\ell\right)e^{\theta_\star+\left(x_\star-1\right)H\left(\ell\right)}\right].
	\end{cases}
\end{equation}

For nonlinear Hawkes processes, \cite{ZhuIII} established the level-3 large deviation principle first and then used the contraction principle to obtain the large deviation principle for $\mathbb{P}\left(N_t/t\in\cdot\right)$. \cite{ZhuIV} proved the large deviations for Markovian Hawkes processes and generalized the proof to the case when $h(\cdot)$ is a sum of exponentials starting with the case of exponential $h(\cdot)$. 

\subsection{Moderate deviation principles}
For any $\sqrt{n}\ll c_n\ll n$, a family of probability measures $\{\mathbb{P}_{n}\}_{n\in\mathcal{N}}$ on a topological space $\left(X,\mathcal{T}\right)$ satisfies a moderate deviation principle with rate function $J(\cdot):X\to[0,\infty]$ 
if $J$ is a lower semi-continuous function and for any Borel set $A$, 
\begin{equation*}
-\inf_{x\in A^{o}}J(x)\leq\liminf_{n\rightarrow\infty}\frac{n}{c^2_{n}}\log \mathbb{P}_{n}(A)
\leq\limsup_{n\rightarrow\infty}\frac{n}{c^2_{n}}\log \mathbb{P}_{n}(A)\leq-\inf_{x\in\overline{A}}J(x).
\end{equation*}
That is, $\mathbb{P}_{n}$ satisfies a large deviation principle with speed $\dfrac{c^2_n}{n}$.
For example, let $X_1,\cdots,X_n$ be a sequence of i.i.d random variables commonly distributed as $X$ and assume $\mathbb{E}\left[e^{\theta X}\right]<\infty$ for $\theta$ in some ball around the origin.
Then, $\mathbb{P}_{n}:=\mathbb{P}\left(\dfrac{1}{c_n}\sum^{n}_{i=1}X_i\in\cdot\right)$ satisfies a large deviation principle with speed $\dfrac{c^2_n}{n}$. Moderate deviations fills the gap between ordinary deviations approximated by the central limit theorem and large deviations.

\subsection{Moderate deviations for Hawkes processes}
\cite{ZhuII} proved the moderate deviation principle for a univariate linear Hawkes process, defined by formula~\eqref{lli:0} in section~\ref{sec:LDHP}. With the assumption $\sup_{t>0}t^{3/2}h(t)=C<\infty$, the moderate deviation principle holds with the rate function
\begin{equation}
    J(x)=\dfrac{x^2\left(1-\Vert h\Vert_{L^1}\right)^3}{2v}.
\end{equation}

The moderate deviation principles for a marked linear Hawkes process was studied in \cite{Seol3}. Recall the definition of a marked linear Hawkes process in section~\ref{sec:LDHP}. \cite{Seol3} showed the moderate deviation rate function is 
\begin{equation}
    J(x)=\dfrac{x^2\left(1-\mathbb{E}\left[H(\ell)\right]\right)^3}{2\nu\left(1+\text{Var}\left[H(\ell)\right]\right)},
\end{equation}
with assumptions, $\text{Var}\left[H(\ell)\right]<\infty$ and $\sup_{t>0}t^{3/2}\int_{\mathcal{X}}h(t,\ell)q(d\ell)\le C<\infty$.

\textbf{The other related literature.}
The large deviations of Cox-Ingersoll-Ross process with Hawkes jumps can be found in \cite{zhu2013limit}. \cite{ZBGG15} studied limit theorems of affine jump diffusion processes with Hawkes jumps. Gao and Zhu \cite{GZ2} studied large deviations of the Hawkes process with large initial intensity and also discussed the applications of the model to insurance and queue systems. 
\cite{Yao} studied the moderate deviation principle for multivariate unmarked linear Hawkes processes. And moderate deviation principles have been studied in mixing processes, Markov processes, martingales, etc. (see \cite{GAO1996,CHEN2001,Dembo1996})

\textbf{Organization of this paper.}
The rest of the paper is organized as follows. In section 2, we state our main results. The proof of the main results can be found in section 3.

\section{Main Results}
Recall the discrete-time Hawkes model introduced in section \ref{sec:introduction}, $N_{t}:=\sum_{s=1}^{t}Z_{s}$, $L_{t}:=\sum_{s=1}^{t}X_{s}$ where $Z_t$ is a Poisson random variable conditional on $\mathcal{F}_{t-1}$ with intensity $\lambda_t$ defined by equation \eqref{eq:lambda_t} and $X_t$ is a compound Poisson random variable defined by equation \eqref{eq:compoundpoisson}.
This section states the large deviations and moderate deviations of the discrete-time marked Hawkes process.

\subsection{Large deviations}
The formal definition of the large deviation principle has been introduced in section~\ref{Def:LDP}. For the discrete-time Hawkes process, we prove the following the large deviation principles.


\begin{theorem}\label{Thm:NLD}
$\mathbb{P}(N_{t}/t\in\cdot)$ satisfies a large deviation principle
with the rate function
\begin{equation}
I(x)=\sup_{\theta\leq\theta_{c}}\{\theta x-\Gamma(\theta)\},
\end{equation}
where
$\Gamma(\theta):=\nu(g(\theta)-1)$,
where $g(\theta)$ is the minimal solution to the equation
$x=\mathbb{E}[e^{\theta+\Vert\alpha\Vert_{1}\ell_{1,1}(x-1)}]$
for any $\theta\leq\theta_{c}$, where
$\theta_{c}=-\log\mathbb{E}[\Vert\alpha\Vert_{1}\ell_{1,1}e^{\Vert\alpha\Vert_{1}\ell_{1,1}(x_{c}-1)}]>0$,
where $x_{c}>1$ satisfies the equation
$x_{c}\mathbb{E}[\Vert\alpha\Vert_{1}\ell_{1,1}e^{\Vert\alpha\Vert_{1}\ell_{1,1}(x_{c}-1)}]
=\mathbb{E}[e^{\Vert\alpha\Vert_{1}\ell_{1,1}(x_{c}-1)}]$.
\end{theorem}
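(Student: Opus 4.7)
The plan is to prove this large deviation principle via the Gärtner--Ellis theorem applied to $N_t/t$, with the central step the identification of the limiting logarithmic moment generating function $\Gamma(\theta):=\lim_{t\to\infty}\tfrac{1}{t}\log\mathbb{E}[e^{\theta N_t}]=\nu(g(\theta)-1)$ on $(-\infty,\theta_c]$, together with the steepness of $\Gamma$ at $\theta_c$. The first ingredient is the immigration--branching (cluster) representation, which follows from Poisson additivity: writing
\[
\lambda_t=\nu+\sum_{s=1}^{t-1}\sum_{j=1}^{Z_s}\alpha(t-s)\ell_{s,j}
\]
lets one split $Z_t$ into independent Poissons --- a $\text{Poisson}(\nu)$ immigrant batch plus, for every prior event at time $s$ with mark $\ell_{s,j}$, an independent $\text{Poisson}(\alpha(t-s)\ell_{s,j})$ batch of direct descendants at time $t$. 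Equivalently, each event at time $u$ with mark $\ell$ seeds an independent tree whose node at time $u+k$ produces $\text{Poisson}(\alpha(k)\ell)$ offspring with i.i.d.\ $q$-distributed marks; the subcritical assumption $\Vert\alpha\Vert_1\mathbb{E}[\ell_{1,1}]<1$ makes every cluster almost surely finite.

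Next I would compute the moment generating function of a single cluster. Let $T$ be the total number of nodes in a cluster rooted at a single event whose mark is drawn from $q$, and set $g(\theta):=\mathbb{E}[e^{\theta T}]$. Splitting $T$ into the root plus the i.i.d.\ subtrees of its offspring and applying the Laplace transform of a Poisson give, conditional on the root mark $\ell$,
\[
\mathbb{E}[e^{\theta T}\mid\ell]=e^{\theta}\exp\bigl(\Vert\alpha\Vert_1\ell(g(\theta)-1)\bigr),
\]
and integrating over $\ell$ yields precisely the fixed-point equation $x=\mathbb{E}[e^{\theta+\Vert\alpha\Vert_1\ell_{1,1}(x-1)}]$, with $g(\theta)$ identified as its minimal nonnegative solution by monotone iteration from $x=0$. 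The right-hand side is strictly increasing and strictly convex in $x$, so a real fixed point exists iff the curve meets the line $y=x$; the critical case is tangency, and writing out the two tangency conditions reproduces exactly the equations characterising $(\theta_c,x_c)$ in the statement. Subcriticality, evaluated at $x_c>1$, gives $\theta_c>0$.

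The cluster decomposition now expresses $N_t=\sum_{s=1}^t\sum_{k=1}^{K_s}T_{s,k}^{(t)}$, where the $K_s$ are i.i.d.\ $\text{Poisson}(\nu)$ immigrant counts and $T_{s,k}^{(t)}$ is the number of nodes of the $(s,k)$-th cluster that fall in $[1,t]$; mutual independence gives
\[
\frac{1}{t}\log\mathbb{E}[e^{\theta N_t}]=\frac{\nu}{t}\sum_{s=1}^t\bigl(\mathbb{E}[e^{\theta T_{s,1}^{(t)}}]-1\bigr).
\]
Since $T_{s,1}^{(t)}\nearrow T_{s,1}$ monotonically, with $\mathbb{E}[e^{\theta T_{s,1}}]=g(\theta)<\infty$ for $\theta<\theta_c$, monotone/dominated convergence plus Cesàro averaging deliver $\Gamma(\theta)=\nu(g(\theta)-1)$. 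Implicit differentiation of the fixed-point equation gives
\[
g'(\theta)=\frac{g(\theta)}{1-\mathbb{E}\bigl[\Vert\alpha\Vert_1\ell_{1,1}e^{\theta+\Vert\alpha\Vert_1\ell_{1,1}(g(\theta)-1)}\bigr]},
\]
whose denominator vanishes at $\theta_c$ by the very definition of tangency, so $g'(\theta)\uparrow\infty$ as $\theta\uparrow\theta_c$. Hence $\Gamma$ is essentially smooth on $(-\infty,\theta_c)$ and steep at $\theta_c$, and Gärtner--Ellis delivers the LDP with rate $I(x)=\sup_{\theta\leq\theta_c}\{\theta x-\Gamma(\theta)\}$. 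The principal obstacle is twofold: uniform-in-$s$ control of the truncated cluster generating functions, sharp enough that the Cesàro-averaged error vanishes; and the delicate boundary behaviour at $\theta_c$, where one must show that the minimal-fixed-point branch $g(\theta)$ extends continuously up to $\theta_c$ and meets $x_c$ (rather than simply terminating earlier) before the steepness argument can be invoked.
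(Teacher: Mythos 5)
Your proposal is correct in outline, but it takes a genuinely different route from the paper. The paper never introduces the cluster representation: it computes $\mathbb{E}[e^{\theta N_t}]$ directly by iterated conditioning on $\mathcal{F}_{t-1}$, using that $Z_t$ is conditionally Poisson$(\lambda_t)$ and that $X_t$ is a compound sum of marks, which produces an explicit recursion $f_s(\theta)=\theta+\log\mathbb{E}\bigl[e^{((e^{f_{s-1}(\theta)}-1)\alpha(1)+\cdots+(e^{f_0(\theta)}-1)\alpha(s))\ell_{1,1}}\bigr]$ and the exact formula $\mathbb{E}[e^{\theta N_t}]=e^{\nu\sum_{s=0}^{t-1}(e^{f_s(\theta)}-1)}$; the limit $x=e^{f_\infty(\theta)}$ then satisfies the same fixed-point equation $x=\mathbb{E}[e^{\theta+\Vert\alpha\Vert_1\ell_{1,1}(x-1)}]$, minimality being read off from the monotonicity of $f_t(\theta)$ in $t$ (decreasing for $\theta\le 0$, increasing for $\theta>0$, with the sign of $G(e^\theta)$ locating $e^\theta$ relative to the two roots). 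You instead exploit Poisson superposition to get the immigration--branching decomposition, identify $g(\theta)$ as the moment generating function of the total cluster size, and obtain the Ces\`aro limit of truncated-cluster MGFs; from that point on (tangency characterization of $(\theta_c,x_c)$, divergence for $\theta>\theta_c$, implicit differentiation for steepness, G\"artner--Ellis) the two arguments coincide. Your route buys a probabilistic meaning for $g(\theta)$, makes minimality of the fixed point and finiteness at $\theta=\theta_c$ (truncated MGFs are monotone and bounded by $x_c$) transparent, and in fact the ``uniform-in-$s$ control'' you worry about is not needed: the truncated cluster MGF depends only on $t-s$, is monotone in that lag, and converges to $g(\theta)$, so plain Ces\`aro averaging suffices, while for $\theta>\theta_c$ monotone convergence gives divergence. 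The paper's route buys a self-contained computation that avoids justifying the cluster construction, at the cost of a somewhat heavier induction; both proofs handle the boundary behaviour at $\theta_c$ (continuity of the minimal branch up to $x_c$ and blow-up of the derivative) at the same, fairly terse, level of detail, so your flagged ``obstacle'' there is real but no worse than what the paper itself does.
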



\begin{theorem}\label{Thm:LLD}
$\mathbb{P}(L_{t}/t\in\cdot)$ satisfies a large deviation principle
with the rate function
\begin{equation}
I_{L}(x)=\sup_{\theta\leq\theta_{c}}\{\theta x-\Gamma_{L}(\theta)\},
\end{equation}
where
$\Gamma_{L}(\theta):=\nu(g_{L}(\theta)-1)$,
where $g_{L}(\theta)$ is the minimal solution to the equation
$x=\mathbb{E}[e^{\theta\ell_{1,1}+\Vert\alpha\Vert_{1}\ell_{1,1}(x-1)}]$
for any $\theta\leq\theta_{c}$, where
$\theta_{c}>0$ satisfies the equation
$\mathbb{E}[\Vert\alpha\Vert_{1}\ell_{1,1}e^{\theta_{c}\ell_{1,1}+\Vert\alpha\Vert_{1}\ell_{1,1}(x_{c}-1)}]=1$,
where $x_{c}$ satisfies the equation $x_{c}=\mathbb{E}[e^{\theta_c\ell_{1,1}+\Vert\alpha\Vert_{1}\ell_{1,1}(x_{c}-1)}]$.
\end{theorem}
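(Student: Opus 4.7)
The plan is to mirror the proof of Theorem~\ref{Thm:NLD} for $N_t/t$ and adapt it to the mark-weighted aggregate $L_t=\sum_{s=1}^t X_s$. Specifically, I would again apply the G\"artner--Ellis theorem, for which the key input is the limiting logarithmic moment generating function
\[
\Gamma_L(\theta)=\lim_{t\to\infty}\frac{1}{t}\log\mathbb{E}[e^{\theta L_t}],
\]
and then take its Legendre--Fenchel transform to obtain $I_L$. The structural difference from Theorem~\ref{Thm:NLD} is that each point now contributes $e^{\theta\ell}$ rather than $e^{\theta}$, which propagates directly into the associated fixed-point equation.

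To compute $\Gamma_L$, I would use the immigration--branching (cluster) representation of the discrete-time model. By Poisson thinning applied to $\lambda_t=\nu+\sum_{s=1}^{t-1}\alpha(s)X_{t-s}$, each past point at time $r$ carrying mark $\ell$ independently spawns, at every future time $t=r+s$, a Poisson$(\alpha(s)\ell)$ number of offspring; its total progeny size is therefore Poisson$(\Vert\alpha\Vert_1\ell)$ with i.i.d.\ marks. Let $T_\ell$ denote the sum of marks in the entire cluster rooted at one event with mark $\ell$, and set $m(\theta):=\mathbb{E}[e^{\theta T_{\ell_{1,1}}}]$. Conditioning on the first generation gives
\[
\mathbb{E}[e^{\theta T_\ell}\mid\ell]=e^{\theta\ell+\Vert\alpha\Vert_1\ell(m(\theta)-1)},
\]
and averaging over $\ell$ yields $m(\theta)=\mathbb{E}[e^{\theta\ell_{1,1}+\Vert\alpha\Vert_1\ell_{1,1}(m(\theta)-1)}]$, the very equation defining $g_L(\theta)$. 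Since the immigrants in $[1,t]$ form a Poisson$(\nu t)$ population with independent clusters, a compound-Poisson computation---after controlling the contribution of clusters that straddle time $t$ using $\Vert\alpha\Vert_1\mathbb{E}[\ell_{1,1}]<1$---yields $\Gamma_L(\theta)=\nu(g_L(\theta)-1)$ on the domain where the fixed-point equation admits a real solution, that is, for $\theta\leq\theta_c$.

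The threshold $\theta_c$ is the largest $\theta$ at which the convex map $x\mapsto\mathbb{E}[e^{\theta\ell_{1,1}+\Vert\alpha\Vert_1\ell_{1,1}(x-1)}]$ still touches the diagonal; at criticality the two branches of solutions coalesce into the single tangency point $x_c$, yielding the pair of equations in the theorem. For $\theta>\theta_c$ the equation has no real solution and $\Gamma_L(\theta)=+\infty$. Implicit differentiation gives
\[
g_L'(\theta)=\frac{\mathbb{E}[\ell_{1,1}e^{\theta\ell_{1,1}+\Vert\alpha\Vert_1\ell_{1,1}(g_L(\theta)-1)}]}{1-\mathbb{E}[\Vert\alpha\Vert_1\ell_{1,1}e^{\theta\ell_{1,1}+\Vert\alpha\Vert_1\ell_{1,1}(g_L(\theta)-1)}]},
\]
whose denominator vanishes as $\theta\uparrow\theta_c$, so $\Gamma_L'(\theta)\to+\infty$ and $\Gamma_L$ is essentially smooth on $(-\infty,\theta_c)$. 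G\"artner--Ellis then delivers the LDP with rate function $I_L(x)=\sup_{\theta\leq\theta_c}\{\theta x-\Gamma_L(\theta)\}$.

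The main obstacle will be the truncation argument that justifies discarding clusters straddling time $t$: one has to show that their mark contribution is only $o(t)$ in $\log\mathbb{E}[e^{\theta L_t}]$, uniformly for $\theta$ in compact subsets of $(-\infty,\theta_c)$; the subcriticality hypothesis $\Vert\alpha\Vert_1\mathbb{E}[\ell_{1,1}]<1$ ensures cluster sizes have finite exponential moments and should suffice, but making this quantitative is the most delicate step. A secondary subtlety is the upper-bound half of the LDP near the boundary $\theta_c$, which is precisely where the essential steepness computed above is needed so that no exposed point of $I_L$ is lost in the G\"artner--Ellis step.
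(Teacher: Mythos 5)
Your proposal is correct in substance but takes a genuinely different route from the paper. The paper never invokes the cluster (immigration--branching) representation: it computes $\mathbb{E}[e^{\theta L_t}]$ \emph{exactly} at each finite $t$ by iterated conditioning on $\mathcal{F}_{t-1}$, using that $X_t$ is compound Poisson with intensity $\lambda_t$; peeling off one time step at a time gives $\mathbb{E}[e^{\theta L_t}]=e^{\nu\sum_{s=0}^{t-1}(g_s(\theta)-1)}$ with $g_0(\theta)=\mathbb{E}[e^{\theta\ell_{1,1}}]$ and $g_s(\theta)=\mathbb{E}\left[e^{(\theta+\sum_{r=1}^{s}(g_{s-r}(\theta)-1)\alpha(r))\ell_{1,1}}\right]$, and then identifies $\lim_t\frac{1}{t}\log\mathbb{E}[e^{\theta L_t}]=\nu(g_\infty(\theta)-1)$ with $g_\infty$ the minimal fixed point via monotonicity of the iterates (as in the proof of Theorem~\ref{Thm:NLD}), before applying G\"artner--Ellis exactly as you do. Your cluster route is the discrete-time analogue of Bordenave--Torrisi and Karabash--Zhu: it reaches the same fixed-point equation, the same tangency pair $(\theta_c,x_c)$, and your implicit-differentiation formula for $g_L'$ is arguably the cleanest way to see steepness; what it buys is probabilistic transparency (the fixed point is literally the MGF of a cluster's mark sum). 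What the paper's route buys is that the ``straddling clusters'' issue you single out as the delicate step never arises: the exact finite-$t$ formula already truncates clusters at the horizon, and indeed your time-truncated cluster MGFs coincide with the paper's iterates $g_s(\theta)$, so the edge-effect control reduces to monotone/bounded convergence of $g_s(\theta)$ (increasing to the minimal fixed point for $\theta>0$, decreasing for $\theta\le 0$, diverging for $\theta>\theta_c$) rather than to any quantitative estimate; note also that G\"artner--Ellis only needs the pointwise limit in $\theta$, not uniformity on compacts. If you carry out your plan, do record two points that are implicit but unstated in your sketch: (i) why the cluster mark-sum MGF is the \emph{minimal} solution of the fixed-point equation (generation-by-generation truncation produces iterates of the convex map started below the fixed point), and (ii) why $\lim_t\frac{1}{t}\log\mathbb{E}[e^{\theta L_t}]=+\infty$ for $\theta>\theta_c$, which is what legitimizes restricting the Legendre supremum to $\theta\le\theta_c$.
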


\subsection{Moderate deviations}

In terms of the moderate deviations of the discrete-time Hawkes process, we assume $\sup_{t>0}t^{3/2}\alpha(t) = C<\infty$. 
Recall the equation~\eqref{eq:lln}, where $\mu$ and $\Tilde{\mu}$ denote the limits in the law of large numbers for $N_t$ and $L_t$, respectively. 
We obtain the following moderate deviation principles for the discrete-time Hawkes process.

\begin{theorem}\label{Thm:NMDP}
For any Borel set $A$ and time sequence $c(t)$ such that $\sqrt{t}\ll c(t)\ll t$, we have the following moderate deviation principle.
\begin{align*}
-\inf_{x\in A^{o}}J(x)
&\leq\liminf_{t\rightarrow\infty}\frac{t}{c^2(t)}\log \mathbb{P}\left(\dfrac{N_t-\mu t}{c(t)}\in A\right)\\
&\leq\limsup_{t\rightarrow\infty}\frac{t}{c^2(t)}\log \mathbb{P}\left(\dfrac{N_t-\mu t}{c(t)}\in A\right)\leq-\inf_{x\in\overline{A}}J(x),
\end{align*}
where \begin{equation}
    J(x)
    =\dfrac{x^2\left(1-\mathbb{E}\left[\ell_{1,1}\Vert\alpha\Vert_1\right]\right)^3}{2\nu \left(1+\text{Var}(\ell_{1,1})\Vert\alpha\Vert^2_1\right)}.
\end{equation}
\end{theorem}

\begin{theorem}\label{Thm:LMDP}
For any Borel set $A$ and time sequence $c(t)$ such that $\sqrt{t}\ll c(t)\ll t$, we have the following moderate deviation principle.
\begin{align*}
-\inf_{x\in A^{o}}J(x)&\leq\liminf_{t\rightarrow\infty}\frac{t}{c^2_{t}}\log \mathbb{P}\left(\dfrac{L_t-\Tilde{\mu t}}{c(t)}\in A\right) \\
&\leq\limsup_{t\rightarrow\infty}\frac{t}{c^2(t)}\log \mathbb{P}\left(\dfrac{L_t-\Tilde{\mu} t}{c(t)}\in A\right)\leq-\inf_{x\in\overline{A}}J(x),
\end{align*}
where \begin{equation}
    J(x)
    =\dfrac{x^2\left(1-\mathbb{E}\left[\ell_{1,1}\Vert\alpha\Vert_1\right]\right)^3}{2\nu \mathbb{E}\left[\ell^2_{1,1}\right]}.
\end{equation}
\end{theorem}


\section{Proof of Main Results}
This section states the proof of our main results. Before we proceed, let's recall a version of G\"{a}rtner-Ellis theorem which will be used in our proof. 
\begin{theorem}[G\"{a}rtner-Ellis theorem (Theorem 2.3.6 \cite{Dembo})]\label{GE}
	Let $(\mu_n)_{n\in\mathbb{N}}$ be a sequence of probability measures on $\left(\mathbb{R},\mathbb{B}\left(\mathbb{R}\right)\right)$. Define the logarithmic moment generating function
	\begin{equation}
	    \Lambda_{n}\left(\theta\right):=\int_{\mathbb{R}}e^{\theta x_n}d\mu_n,
	\end{equation}
	and assume that for all $\theta\in\mathbb{R}$ a possibly infinite limit $\Lambda\left(\theta\right)$ in (\ref{LMGF}) 
	\begin{equation}\label{LMGF}
	\Lambda\left(\theta\right) := \lim\limits_{n\to\infty}\frac{1}{n}\log\left[\Lambda_{n}\left(n\theta\right)\right],
	\end{equation}
	exists and $0\in\mathcal{D}^\mathcal{O}_\Lambda$, where $\mathcal{D}^\mathcal{O}_\Lambda$ is the interior of $\mathcal{D}_\Lambda$ and $\mathcal{D}_\Lambda:=\left\{\theta\in\mathbb{R}:\Lambda\left(\theta\right)<\infty\right\}$.
	Suppose in addition that $\Lambda$ is lower semi-continuous on $\mathbb{R}$, differentiable on $\mathcal{D}^{\mathcal{O}}_\Lambda$, and $\Lambda$ is steep, i.e.
	$$\lim\limits_{n\to\infty}\left|\Lambda^{\prime}\left(\theta_n\right)\right|=\infty$$
	whenever $\theta_n\in\mathcal{D}^{\mathcal{O}}_\Lambda$, $\theta_n\to \theta\in\partial\mathcal{D}^{\mathcal{O}}_\Lambda$ as $n\to \infty$. Then $\left(\mu_n\right)_n\in\mathbb{N}$ satisfies the LDP with rate function $I$, which is Fenchel-Legendre transform of $\Lambda$,
	\begin{equation}
    I(x)=\sup_{\theta\in\mathbb{R}}\{\theta x-\Lambda(\theta)\}.
    \end{equation}

\end{theorem}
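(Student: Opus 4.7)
The plan is to establish the LDP in two halves — an upper bound on closed sets and a matching lower bound on open sets — following the classical Gärtner–Ellis strategy. Throughout I will write $\Lambda$ for the limiting logarithmic moment generating function, $I$ for its Fenchel–Legendre transform, and note that convexity of each $\Lambda_n$ descends to convexity of $\Lambda$, so the convex-analytic machinery of Legendre duality is available.

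First I would prove the upper bound on compact sets via Chebyshev. For $\theta \ge 0$ and real $a$, Markov's inequality gives $\mu_n([a,\infty)) \le e^{-n\theta a}\,\Lambda_n(n\theta)$, so combining with the hypothesis $\Lambda(\theta) = \lim_n \tfrac{1}{n}\log\Lambda_n(n\theta)$ yields
\begin{equation*}
\limsup_{n\to\infty}\frac{1}{n}\log\mu_n([a,\infty)) \le -\theta a + \Lambda(\theta).
\end{equation*}
Optimizing over $\theta \ge 0$ and using the analogous bound for $(-\infty,a]$ with $\theta \le 0$ produces the upper bound on every half-line, hence on every compact interval, and then on every compact set by a finite cover argument. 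To extend to arbitrary closed sets I need exponential tightness; this follows because $0 \in \mathcal{D}_\Lambda^\mathcal{O}$ implies $\Lambda(\theta) < \infty$ for some $\theta > 0$ and some $\theta < 0$, and the same Chebyshev estimate then shows $\limsup_n \tfrac{1}{n}\log\mu_n(|X_n|\ge M) \to -\infty$ as $M\to\infty$.

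Next I would prove the lower bound via an exponential change of measure. Given an open $G$ and a point $x \in G$, the differentiability and steepness of $\Lambda$ on $\mathcal{D}_\Lambda^\mathcal{O}$ guarantee that — provided $x$ lies in the range of $\Lambda'$ — the supremum defining $I(x)$ is attained at a unique interior $\eta = \eta(x)$ with $\Lambda'(\eta) = x$. Introduce the tilted measures
\begin{equation*}
\frac{d\tilde{\mu}_n}{d\mu_n}(y) = \frac{e^{n\eta y}}{\Lambda_n(n\eta)}.
\end{equation*}
Under $\tilde\mu_n$, the tilted limiting log-MGF equals $\tilde\Lambda(\theta) = \Lambda(\theta+\eta) - \Lambda(\eta)$, which is finite in a neighborhood of $0$ and satisfies $\tilde\Lambda'(0) = x$; applying Chebyshev as above to $\tilde\mu_n$ shows that $X_n$ concentrates exponentially at $x$ under the tilt. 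Inverting the Radon–Nikodym derivative on a small interval $B\subset G$ around $x$ gives
\begin{equation*}
\mu_n(B) = \mathbb{E}_{\tilde\mu_n}\!\left[e^{-n\eta X_n}\Lambda_n(n\eta)\mathbf{1}_B\right] \ge e^{-n\eta x - n\varepsilon}\Lambda_n(n\eta)\,\tilde\mu_n(B)
\end{equation*}
for any $\varepsilon>0$ (using $|X_n - x|<\varepsilon/|\eta|$ on $B$), whence $\liminf_n \tfrac{1}{n}\log\mu_n(G) \ge -\eta x + \Lambda(\eta) = -I(x)$. For $x$ outside the range of $\Lambda'$ I approximate $x$ by interior points $x_k$ along which the attainers $\eta_k$ diverge, invoke steepness to control the boundary limit, and conclude by lower semicontinuity of $I$.

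The main obstacle is the lower bound — specifically, ensuring the tilted measures actually concentrate at the prescribed $x$. This requires that $\Lambda$ exist and be finite on a \emph{neighborhood} of $\eta(x)$, which is exactly what $0\in\mathcal{D}_\Lambda^\mathcal{O}$ together with $\eta\in\mathcal{D}_\Lambda^\mathcal{O}$ buys; steepness is then what forces the attainer $\eta(x)$ to sit in the interior rather than on $\partial\mathcal{D}_\Lambda^\mathcal{O}$, where the tilt would be ill-defined. Secondary bookkeeping — goodness of $I$, its lower semicontinuity, and the fact that the $\sup$ in $I(x)$ is unrestricted despite $\Lambda$ possibly being $+\infty$ off $\mathcal{D}_\Lambda$ — follows routinely from convex duality once the above two halves are in place. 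Because the statement is confined to $(\mathbb{R},\mathcal{B}(\mathbb{R}))$, I can avoid the multidimensional refinements involving exposed points and relative interiors that appear in the general Dembo–Zeitouni proof.
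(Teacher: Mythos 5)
There is nothing in the paper to compare your argument against: Theorem \ref{GE} is the G\"{a}rtner--Ellis theorem itself, quoted from Dembo and Zeitouni (Theorem 2.3.6 of \cite{Dembo}) and used purely as a black box in the proofs of Theorems \ref{Thm:NLD} and \ref{Thm:LLD}; the paper offers no proof of it. What you have written is, in outline, the standard proof from the cited reference, specialized to dimension one: Chebyshev bounds on half-lines optimized over $\theta$, exponential tightness from $0\in\mathcal{D}^{\mathcal{O}}_{\Lambda}$ to pass from compact to closed sets, and, for the lower bound, an exponential tilt at $\eta$ with $\Lambda'(\eta)=x$ together with a concentration estimate under the tilted measures (valid because the tilted limiting log-MGF $\Lambda(\cdot+\eta)-\Lambda(\eta)$ is finite near $0$ and has derivative $x$ there), with steepness keeping the maximizer in $\mathcal{D}^{\mathcal{O}}_{\Lambda}$. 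That outline is sound. Two places would need care in a full write-up: the maximizer $\eta(x)$ need not be unique (uniqueness would require strict convexity of $\Lambda$ and is not needed --- any $\eta\in\mathcal{D}^{\mathcal{O}}_{\Lambda}$ with $\Lambda'(\eta)=x$ serves), and your handling of $x$ outside the range of $\Lambda'$ is vague as stated; the clean one-dimensional argument is that, by essential smoothness and convex duality, $\inf_{G}I$ is approached by points $x_k$ in the range of $\Lambda'$ with $I(x_k)\to I(x)$ (a convex lower semicontinuous function on $\mathbb{R}$ is continuous on its domain up to the endpoints), which is the role played by Rockafellar's lemma and the exposed-point reduction in the Dembo--Zeitouni proof. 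With those details filled in, your argument is essentially the proof of the theorem as it appears in the cited source.
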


\subsection{Proof of large deviations}
\begin{proof}[Proof of Theorem \ref{Thm:NLD}]
	For any $\theta\in\mathbb{R}$, we can compute that
	\begin{align*}
	\mathbb{E}[e^{\theta N_{t}}]
	&=\mathbb{E}\left[e^{\theta N_{t-1}+\theta Z_{t}}\right]
	\\
	&=\mathbb{E}\left[\mathbb{E}\left[e^{\theta N_{t-1}+\theta Z_{t}}|\mathcal{F}_{t-1}\right]\right]
	\\
	&=\mathbb{E}\left[e^{\theta N_{t-1}}\mathbb{E}\left[e^{\theta Z_{t}}|\mathcal{F}_{t-1}\right]\right]
	\\
	&=\mathbb{E}\left[e^{\theta N_{t-1}+(e^{\theta}-1)\lambda_{t}}\right],
	\end{align*}
	where we used the fact that $Z_{t}$ is Poisson with parameter $\lambda_{t}$
	conditional on $\mathcal{F}_{t-1}$, the natural filtration up to time $t-1$.
	By the definition of $\lambda_{t}$, we have
	\begin{align*}
	\mathbb{E}\left[e^{\theta N_{t}}\right]
	&=\mathbb{E}\left[e^{\theta N_{t-1}+(e^{\theta}-1)\nu+(e^{\theta}-1)\sum_{s=1}^{t-1}\alpha(s)X_{t-s}}\right]
	\\
	&=e^{(e^{\theta}-1)\nu}\mathbb{E}\left[e^{\theta N_{t-1}+(e^{\theta}-1)\alpha(1)X_{t-1}+(e^{\theta}-1)\sum_{s=2}^{t-1}\alpha(s)X_{t-s}}\right]
	\\
	&=e^{(e^{\theta}-1)\nu}\mathbb{E}\left[e^{\theta N_{t-1}+\log\mathbb{E}[e^{(e^{\theta}-1)\alpha(1)\ell_{1,1}}]Z_{t-1}+(e^{\theta}-1)\sum_{s=2}^{t-1}\alpha(s)X_{t-s}}\right].
	\end{align*}
	Let $f_{0}(\theta)=\theta$
	and $f_{1}(\theta)=\theta+\log\mathbb{E}[e^{(e^{\theta}-1)\alpha(1)\ell_{1,1}}]$.
	Then, 
	\begin{align*}
	\mathbb{E}\left[e^{\theta N_{t}}\right]
	&=e^{(e^{\theta}-1)\nu}\mathbb{E}\left[e^{\theta N_{t-2}+f_{1}(\theta)Z_{t-1}+(e^{\theta}-1)\sum_{s=2}^{t-1}\alpha(s)X_{t-s}}\right]
	\\
	&=e^{(e^{\theta}-1)\nu}\mathbb{E}\left[e^{\theta N_{t-2}
		+(e^{f_{1}(\theta)}-1)\lambda_{t-1}+(e^{\theta}-1)\sum_{s=2}^{t-1}\alpha(s)X_{t-s}}\right].
	\end{align*}
	By the definition of $\lambda_{t-1}$, we get
	\begin{align*}
	&\mathbb{E}\left[e^{\theta N_{t}}\right]
	\\
	&=e^{(e^{\theta}-1)\nu}\mathbb{E}\left[e^{\theta N_{t-2}
		+(e^{f_{1}(\theta)}-1)(\nu+\sum_{s=1}^{t-2}\alpha(s)X_{t-1-s})+(e^{\theta}-1)\sum_{s=2}^{t-1}\alpha(s)X_{t-s}}\right]
	\\
	&=e^{(e^{\theta}-1)\nu+(e^{f_{1}(\theta)}-1)\nu}\mathbb{E}\left[e^{\theta N_{t-2}
		+(e^{f_{1}(\theta)}-1)\sum_{s=1}^{t-2}\alpha(s)X_{t-1-s}+(e^{\theta}-1)\sum_{s=2}^{t-1}\alpha(s)X_{t-s}}\right]
	\\
	&=e^{(e^{\theta}-1)\nu+(e^{f_{1}(\theta)}-1)\nu}\mathbb{E}\Bigg[e^{
		\theta N_{t-2}
		+(e^{f_{1}(\theta)}-1)\alpha(1)X_{t-2}+(e^{\theta}-1)\alpha(2)X_{t-2}}
	\\
	&\qquad\qquad\qquad
	\cdot e^{(e^{f_{1}(\theta)}-1)\sum_{s=2}^{t-2}\alpha(s)X_{t-1-s}+(e^{\theta}-1)\sum_{s=3}^{t-1}\alpha(s)X_{t-s}}\Bigg]
	\\
	&=e^{(e^{\theta}-1)\nu+(e^{f_{1}(\theta)}-1)\nu}\mathbb{E}\Bigg[e^{
		\theta N_{t-2}
		+\log\mathbb{E}[e^{((e^{f_{1}(\theta)}-1)\alpha(1)+(e^{\theta}-1)\alpha(2))\ell_{1,1}}] Z_{t-2}}
	\\
	&\qquad\qquad\qquad
	\cdot e^{(e^{f_{1}(\theta)}-1)\sum_{s=2}^{t-2}\alpha(s)X_{t-1-s}+(e^{\theta}-1)\sum_{s=3}^{t-1}\alpha(s)X_{t-s}}\Bigg].
	\end{align*}
	By induction on $t$, we get
	\begin{align*}
	\mathbb{E}\left[e^{\theta N_{t}}\right]
	&=e^{\nu((e^{f_{0}(\theta)}-1)+(e^{f_{1}(\theta)}-1)
		+\cdots+(e^{f_{t-2}(\theta)}-1))}
	\mathbb{E}[e^{f_{t-1}(\theta)N_{1}}]
	\\
	&=e^{\nu((e^{f_{0}(\theta)}-1)+(e^{f_{1}(\theta)}-1)
		+\cdots+(e^{f_{t-2}(\theta)}-1))}e^{\nu(e^{f_{t-1}(\theta)}-1)},
	\end{align*}
	where $f_{0}(\theta)=\theta$,
	$f_{1}(\theta)=\theta+\log\mathbb{E}[e^{(e^{\theta}-1)\alpha(1)\ell_{1,1}}]$,
	and
	\begin{equation}
	f_{2}(\theta)=\theta+\log\mathbb{E}\left[e^{((e^{f_{1}(\theta)}-1)\alpha(1)+(e^{\theta}-1)\alpha(2))\ell_{1,1}}\right],
	\end{equation}
	and more generally, for every $s\geq 1$,
	\begin{equation}
	f_{s}(\theta)=\theta+\log\mathbb{E}\left[e^{((e^{f_{s-1}(\theta)}-1)\alpha(1)+(e^{f_{s-2}(\theta)}-1)\alpha(2)
		+\cdots+(e^{f_{0}(\theta)}-1)\alpha(s))\ell_{1,1}}\right].
	\end{equation}
	This implies that
	\begin{equation}
	\lim_{t\rightarrow\infty}\frac{1}{t}\log\mathbb{E}\left[e^{\theta N_{t}}\right]
	=\nu(e^{f_{\infty}(\theta)}-1),
	\end{equation}
	where
	\begin{equation}\label{eq:finf}
	f_{\infty}(\theta)=\theta+\log\mathbb{E}\left[e^{(e^{f_{\infty}(\theta)}-1)\Vert\alpha\Vert_{1}\ell_{1,1}}\right].
	\end{equation}
	Let $x=e^{f_{\infty}(\theta)}$. Thus, equation \eqref{eq:finf} can be rewritten as 
	\begin{equation}\label{eq:x}
		x = \mathbb{E}\left[e^{\theta+(x-1)\Vert\alpha\Vert_{1}\ell_{1,1}}\right].
	\end{equation}
	It means we need to show the solution of equation~\eqref{eq:x} exists.
	
	First, it is not hard to see when $\theta \le 0$, $e^{f_t(\theta)}$ is decreasing in $t$ and $0< e^{f_t(\theta)}\le 1$. Thus, the limit of $e^{f_t(\theta)}$ converges to a finite limit $x_{\star}$ as $t\to\infty$, which satisfies equation~\eqref{eq:x}. 
	
	Next, when $\theta>0$, $f_t(\theta)$ is increasing in $t$. We need to determine for what values of $\theta$ the solution of equation~\eqref{eq:x} exists. Let
	\begin{equation}
		G(x) = \mathbb{E}\left[e^{\theta+(x-1)\Vert\alpha\Vert_{1}\ell_{1,1}}\right]-x.
	\end{equation}
	It is easy to see that $G(x)$ is increasing in $\theta$ and $G^{\prime\prime}(x)>0$. If $\theta=0$, then $G(x)=\mathbb{E}\left[e^{(x-1)\Vert\alpha\Vert_{1}\ell_{1,1}}\right]-x$ satisfies $G(1)=0$. 
	Moreover, $G^{\prime}(1)=\mathbb{E}\left[\Vert\alpha\Vert_{1}\ell_{1,1}\right]-1$. By the assumption $\mathbb{E}\left[\Vert\alpha\Vert_{1}\ell_{1,1}\right]<1$, we have $G^{\prime}(1)<0$. It implies $\min_{x>1}G(x)<0$. 
	Hence, there exists some critical $\theta_c>0$ such that $\min_{x>1}G(x)=0$. In other words, with $\theta_c$, we can find critical value $x_c$ such that $G(x_c)=G^{\prime}(x_c)=0$. Thus, we can find
	\begin{equation}\label{eq:thetaC}
		\theta_c=-\log\mathbb{E}\left[\Vert\alpha\Vert_{1}\ell_{1,1}e^{(x_c-1)\Vert\alpha\Vert_{1}\ell_{1,1}}\right]
	\end{equation}
	where $x_c>1$ satisfies $x\mathbb{E}\left[\Vert\alpha\Vert_{1}\ell_{1,1}e^{(x-1)\Vert\alpha\Vert_{1}\ell_{1,1}}\right]=\mathbb{E}\left[e^{(x-1)\Vert\alpha\Vert_{1}\ell_{1,1}}\right]$. Therefore, equation~\eqref{eq:x} has finite solutions if and only if $\theta<\theta_c$. 
	
	
	$G(x)$ is strictly convex in $x$. Hence, there are at most two solutions for equation~\eqref{eq:x}. 
	When $0<\theta < \theta_{c}$, equation~\eqref{eq:x} has two solutions. It's not hard to check $G(1)=e^\theta-1>0$ and $G'(1)=\mathbb{E}\left[\Vert\alpha\Vert_{1}\ell_{1,1}e^\theta\right]-1<0$.
	$f_t(\theta)$ is increasing in $t$ and for $t=0$, $e^{f_0(\theta)}=e^\theta>1$ and
	\begin{equation}
		G(e^\theta)=e^\theta\left[\mathbb{E}\left[e^{(e^\theta-1)\Vert\alpha\Vert_{1}\ell_{1,1}}\right]-1\right]>0.
	\end{equation}
	Thus, as $t\to\infty$, $e^{f_t(\theta)}$ converges to a finite limit. It must converges to $x_\star$ which is the smaller solution of equation~\eqref{eq:x}.
	
	Similarly, when $\theta<0$, we can check $G(1)<0$ and $G'(1)<0$. $f_t(\theta)$ is decreasing in $t$ and at $t=0$, $e^{f_0(\theta)}=e^{\theta}<1$ with $G(e^\theta)<0$. Thus, as $t\to\infty$, $e^{f_t(\theta)}$ converges to a finite limit. It must converges to $x_\star$ which is also the smaller solution of equation~\eqref{eq:x}. 

	If $\theta>\theta_{c}$, then $\lim_{t\rightarrow\infty}\frac{1}{t}\log\mathbb{E}\left[e^{\theta N_{t}}\right]=\infty$.
	
	Finally, we need to check the essential smoothness condition of $\nu\left(e^{f_\infty(\theta)}-1\right)$.
	\begin{equation}\label{eq:esmooth}
		f^{\prime}_\infty(\theta)=\dfrac{\mathbb{E}\left[e^{(e^{f_{\infty}(\theta)}-1)\Vert\alpha\Vert_{1}\ell_{1,1}}\right]}{\mathbb{E}\left[e^{(e^{f_{\infty}(\theta)}-1)\Vert\alpha\Vert_{1}\ell_{1,1}}\right]-e^{f_\infty(\theta)}\mathbb{E}\left[\Vert\alpha\Vert_{1}\ell_{1,1}e^{(e^{f_{\infty}(\theta)}-1)\Vert\alpha\Vert_{1}\ell_{1,1}}\right]}
	\end{equation}
	By equation~\eqref{eq:thetaC}, it is not hard to find $|f^{\prime}_\infty(\theta)| \to \infty$ as $\theta \to \theta_{c}$, the conclusion then follows G\"{a}rtner-Ellis theorem.
\end{proof}
\begin{proof}[Proof of Theorem \ref{Thm:LLD}]
	For any $\theta\in\mathbb{R}$, we can compute that
	\begin{align*}
	\mathbb{E}[e^{\theta L_{t}}]
	&=\mathbb{E}\left[e^{\theta L_{t-1}+\theta X_{t}}\right]
	\\
	&=\mathbb{E}\left[\mathbb{E}\left[e^{\theta L_{t-1}+\theta X_{t}}|\mathcal{F}_{t-1}\right]\right]
	\\
	&=\mathbb{E}\left[e^{\theta L_{t-1}}\mathbb{E}\left[e^{\theta X_{t}}|\mathcal{F}_{t-1}\right]\right]
	\\
	&=\mathbb{E}\left[e^{\theta L_{t-1}+(\mathbb{E}[e^{\theta\ell_{1,1}}]-1)\lambda_{t}}\right],
	\end{align*}
	where we used the fact that $X_{t}$ is compound Poisson with intensity $\lambda_{t}$
	conditional on $\mathcal{F}_{t-1}$, the natural filtration up to time $t-1$.
	By the definition of $\lambda_{t}$, we have
	\begin{align*}
	\mathbb{E}\left[e^{\theta L_{t}}\right]
	&=\mathbb{E}\left[e^{\theta L_{t-1}+(\mathbb{E}[e^{\theta\ell_{1,1}}]-1)\nu+(\mathbb{E}[e^{\theta\ell_{1,1}}]-1)\sum_{s=1}^{t-1}\alpha(s)X_{t-s}}\right]
	\\
	&=e^{(\mathbb{E}[e^{\theta\ell_{1,1}}]-1)\nu}\mathbb{E}\left[e^{\theta L_{t-1}+(\mathbb{E}[e^{\theta\ell_{1,1}}]-1)\alpha(1)X_{t-1}+(\mathbb{E}[e^{\theta\ell_{1,1}}]-1)\sum_{s=2}^{t-1}\alpha(s)X_{t-s}}\right]
	\\
	&=e^{(\mathbb{E}[e^{\theta\ell_{1,1}}]-1)\nu}\mathbb{E}\left[e^{\theta L_{t-2}+
		(\theta+(\mathbb{E}[e^{\theta\ell_{1,1}}]-1)\alpha(1))X_{t-1}+(\mathbb{E}[e^{\theta\ell_{1,1}}]-1)\sum_{s=2}^{t-1}\alpha(s)X_{t-s}}\right]
	\\
	&=e^{(\mathbb{E}[e^{\theta\ell_{1,1}}]-1)\nu}\mathbb{E}\left[e^{\theta L_{t-2}+
		(\mathbb{E}[e^{(\theta+(\mathbb{E}[e^{\theta\ell_{1,1}}]-1)\alpha(1))\ell_{1,1}}]-1)\lambda_{t-1}+(\mathbb{E}[e^{\theta\ell_{1,1}}]-1)\sum_{s=2}^{t-1}\alpha(s)X_{t-s}}\right].
	\end{align*}
	By the definition of $\lambda_{t-1}$, we get
	\begin{align*}
	&\mathbb{E}\left[e^{\theta L_{t}}\right]
	\\
	&=e^{(\mathbb{E}[e^{\theta\ell_{1,1}}]-1)\nu}
	e^{(\mathbb{E}[e^{(\theta+(\mathbb{E}[e^{\theta\ell_{1,1}}]-1)\alpha(1))\ell_{1,1}}]-1)\nu}
	\\
	&\qquad\cdot
	\mathbb{E}\left[e^{\theta L_{t-2}+
		(\mathbb{E}[e^{(\theta+(\mathbb{E}[e^{\theta\ell_{1,1}}]-1)\alpha(1))\ell_{1,1}}]-1)\sum_{s=1}^{t-2}\alpha(s)X_{t-1-s}+(\mathbb{E}[e^{\theta\ell_{1,1}}]-1)\sum_{s=2}^{t-1}\alpha(s)X_{t-s}}\right].
	\end{align*}
	
	By induction on $t$, we get
	\begin{equation}
	\mathbb{E}\left[e^{\theta L_{t}}\right]
	=e^{\nu((g_{0}(\theta)-1)+(g_{1}(\theta)-1)
		+\cdots+(g_{t-1}(\theta)-1))}=\mathbb{E}\left[e^{\nu(\sum^{t-1}_{s=0}g_s(\theta)-1)}\right],
	\end{equation}
	where $g_{0}(\theta)=\mathbb{E}[e^{\theta\ell_{1,1}}]$,
	$g_{1}(\theta)=\mathbb{E}[e^{(\theta+(g_{0}(\theta)-1)\alpha(1))\ell_{1,1}}]$,
	and more generally, for every $s\geq 1$,
	\begin{align}
	g_{s}(\theta)
	&=\mathbb{E}\left[e^{(\theta+(g_{s-1}(\theta)-1)\alpha(1)+(g_{s-2}(\theta)-1)\alpha(2)
		+\cdots+(g_{0}(\theta)-1)\alpha(s))\ell_{1,1}}\right] \nonumber\\
	&=\mathbb{E}\left[e^{(\theta+\sum^{s}_{i=1}\alpha(i)(g_{s-i}(\theta)-1))\ell_{1,1}}\right].
	\end{align}
	This implies that
	\begin{equation}
	\lim_{t\rightarrow\infty}\frac{1}{t}\log\mathbb{E}\left[e^{\theta L_{t}}\right]
	=\nu(g_{\infty}(\theta)-1),
	\end{equation}
	where
	\begin{equation}
	g_{\infty}(\theta)=\mathbb{E}\left[e^{(\theta+(g_{\infty}(\theta)-1)\Vert\alpha\Vert_{1})\ell_{1,1}}\right].
	\end{equation}
	Similar as before, we have $\lim_{t\rightarrow\infty}\frac{1}{t}\log\mathbb{E}\left[e^{\theta L_{t}}\right]=\nu(g_{L}(\theta)-1)$,
	where $g_{L}(\theta)$ is the minimal solution to the equation
	$x=\mathbb{E}[e^{\theta\ell_{1,1}+\Vert\alpha\Vert_{1}\ell_{1,1}(x-1)}]$
	for any $\theta\leq\theta_{c}$, where
	$\theta_{c}$ satisfies the equation
	$\mathbb{E}[\Vert\alpha\Vert_{1}\ell_{1,1}e^{\theta_{c}\ell_{1,1}+\Vert\alpha\Vert_{1}\ell_{1,1}(x_{c}-1)}]=1$,
	where $x_{c}$ satisfies the equation $x_{c}=\mathbb{E}[e^{\theta_c\ell_{1,1}+\Vert\alpha\Vert_{1}\ell_{1,1}(x_{c}-1)}]$.
	If $\theta>\theta_{c}$, then $\lim_{t\rightarrow\infty}\frac{1}{t}\log\mathbb{E}\left[e^{\theta L_{t}}\right]=\infty$.
	We can check the essential smoothness condition
	similar as before.
	\begin{equation}
	    g^{\prime}_{\infty}(\theta)=\dfrac{\mathbb{E}\left[e^{(\theta+(g_{\infty}(\theta)-1)\Vert\alpha\Vert_{1})\ell_{1,1}}\Vert\alpha\Vert_1\ell_{1,1}\right]}{1-\mathbb{E}[\Vert\alpha\Vert_{1}\ell_{1,1}e^{\theta\ell_{1,1}+\Vert\alpha\Vert_{1}\ell_{1,1}(x_{c}-1)}]},
	\end{equation}
	it is not hard to find $|g^{\prime}_\infty(\theta)| \to \infty$ as $\theta \to \theta_{c}$, the conclusion then follows G\"{a}rtner-Ellis theorem.
\end{proof}

\subsection{Proof of moderate deviations}
\begin{proof}[Proof of Theorem~\ref{Thm:NMDP}]
First, for any $\theta\in\mathbb{R}$, we prove that 
$$\lim_{t\to\infty}\dfrac{t}{c^2(t)}\log{\mathbb{E}\left[e^{\frac{c(t)}{t}\theta (N_t-\mu t)}\right]}
=\dfrac{\nu\theta^2\left(1+\text{Var}(\ell_{1,1})\Vert\alpha\Vert^2_1\right)}{2\left(1-\mathbb{E}\left[\ell_{1,1}\Vert\alpha\Vert_1\right]\right)^3},$$
where $\mu$ is defined by equation~\eqref{eq:lln}.

By the proof of Theorem~\ref{Thm:NLD}, we get
\begin{align*}
    \mathbb{E}\left[e^{\frac{c(t)}{t}\theta N_t}\right] 
    &= e^{\nu\left((e^{f_0(\theta_t)}-1)+(e^{f_1(\theta_t)}-1)+\cdots+(e^{f_{t-2}(\theta_t)}-1)\right)}\mathbb{E}\left[e^{f_{t-1}(\theta_t)N_1}\right]\\
    &=e^{\nu\left((e^{f_0(\theta_t)}-1)+(e^{f_1(\theta_t)}-1)+\cdots+(e^{f_{t-1}(\theta_t)}-1)\right)},
\end{align*}
where $f_0(\theta_t)=\theta_t:=\frac{c(t)}{t}\theta$, $f_1(\theta_t)=\theta_t+\log{\mathbb{E}\left[e^{(e^{\theta_t}-1)\alpha(1)\ell_{1,1}} \right]}$, and $f_2(\theta_t)=\theta_t
    +\log{\mathbb{E}\left[e^{((e^{f_1(\theta_t)}-1)\alpha(1)
    +(e^{f_0(\theta_t)}-1)\alpha(2))\ell_{1,1}}\right]}$.
More generally, for every $s\ge 1$,
\begin{align*}
    f_s(\theta_t)
    &=\theta_t
    +\log{\mathbb{E}\left[e^{((e^{f_{s-1}(\theta_t)}-1)\alpha(1)
    +(e^{f_{s-2}(\theta_t)}-1)\alpha(2)+\cdots+(e^{f_{0}(\theta_t)}-1)\alpha(s))\ell_{1,1}}\right]}.
\end{align*}
Then we can rewrite the above equation such that
\begin{align*}
    e^{f_s(\theta_t)} 
    &= e^{\theta_t}
    \mathbb{E}\left[e^{((e^{f_{s-1}(\theta_t)}-1)\alpha(1)
    +(e^{f_{s-2}(\theta_t)}-1)\alpha(2)+\cdots+(e^{f_{0}(\theta_t)}-1)\alpha(s))\ell_{1,1}}\right].
\end{align*}
Let's define $G_t(s)=e^{f_s(\theta_t)}-1$ so that $G_t(s)=
    \mathbb{E}\left[e^{\theta_t+\ell_{1,1}\sum^{s}_{i=1}\alpha(i)G_t(s-i)}\right]-1$. 
Then we have
\begin{align}\label{eq:char Nt}
    \mathbb{E}\left[e^{\frac{c(t)}{t}\theta N_t}\right] 
    &=e^{\nu\sum^{t-1}_{s=0}G_t(s)}.
\end{align}

We write $G_t(s)$ instead of $G(s)$ to indicate its dependence on t because of the term, $\dfrac{c(t)}{t}$. 
As the proof of Theorem~\ref{Thm:NLD} shows and $\dfrac{c(t)}{t}$ is sufficient small so that we have $\dfrac{c(t)}{t}\theta\le \theta_c$ where $\theta_c=-\log{\mathbb{E}\left[\Vert\alpha\Vert_1\ell_{1,1}e^{\Vert\alpha\Vert_1\ell_{1,1}(x_c-1)}\right]}$ and as $s\to \infty$, we get that
$G_t(\infty)$ is the minimal solution to the equation $x_t= \mathbb{E}\left[e^{\frac{c(t)}{t}\theta+\ell_{1,1}\sum^{\infty}_{i=1}\alpha(i)x_t}\right]-1$. Because $\mathbb{E}\left[\ell_{1,1}\right]\Vert\alpha\Vert_{1}<1$, it is easy to see that $x_t=O((c(t)/t))$. Because $x_t=O((c(t)/t))$, we have $G_t(s)=O((c(t)/t))$ uniformly in s. By Taylor's expansion,
\begin{align}\label{eq:taylor Gt}
    G_t(s)&=\dfrac{c(t)\theta}{t} + \mathbb{E}[\ell_{1,1}\sum^{s}_{i=1}\alpha(i)G_{t}(s-i)] \nonumber\\
    &\quad+\dfrac{1}{2}\left(\dfrac{c(t)\theta}{t}\right)^2+\dfrac{1}{2}\mathbb{E}\left[\left(\ell_{1,1}\sum^{s}_{i=1}\alpha(i)G_{t}(s-i)\right)^2\right] \nonumber\\
    &\quad+\dfrac{c(t)\theta}{t}\mathbb{E}[\ell_{1,1}\sum^{s}_{i=1}\alpha(i)G_{t}(s-i)]+O\left(\left(\dfrac{c(t)}{t}\right)^3\right).
\end{align}
Now, let
\begin{equation}\label{eq:GtG1G2}
    G_t(s)=\dfrac{c(t)\theta}{t}G_1(s)+\left(\dfrac{c(t)\theta}{t}\right)^2 G_2(s)+\epsilon_t(s),
\end{equation}
where $G_1(s)$ satisfies
\begin{align}\label{eq:G1}
    G_1(s) = 1 + \mathbb{E}[\ell_{1,1}\sum^{s}_{i=1}\alpha(i)G_{1}(s-i)],
\end{align}
and $G_1(0)=1$, and $G_2(s)$ satisfies
\begin{align}\label{eq:G2}
    G_2(s) = \mathbb{E}[\ell_{1,1}\sum^{s}_{i=1}\alpha(i)G_{2}(s-i)] + \dfrac{1}{2}+(G_1(s)-1)+\dfrac{1}{2}\mathbb{E}\left[\left(\ell_{1,1}\sum^{s}_{i=1}\alpha(i)G_1(s-i)\right)^2\right],
\end{align}
and $G_2(0)=1/2$. Then we can substitute $G_t(s)$ in terms of equation~\eqref{eq:G1} and equation~\eqref{eq:G2} into the left and right side of equation~\eqref{eq:taylor Gt} so that we get $\epsilon_t(s)=O\left(\left(\dfrac{c(t)}{t}\right)^3\right)$.

By equation~\eqref{eq:char Nt},
\begin{align*}
    \dfrac{t}{c^2(t)}\log{\mathbb{E}\left[e^{\frac{c(t)}{t}\theta (N_t-\mu t)}\right]} =\dfrac{t}{c^2(t)}\left( \nu\sum^{t-1}_{s=0}G_t(s)\right)-\dfrac{\mu\theta t}{c(t)}.
\end{align*}
Then we can rewrite the above equation in terms of equation~\eqref{eq:GtG1G2},
\begin{align*}
    \dfrac{t}{c^2(t)}\log{\mathbb{E}\left[e^{\frac{c(t)}{t}\theta (N_t-\mu t)}\right]} 
    &=\dfrac{t}{c^2(t)}\left( \nu\sum^{t-1}_{s=0}\left(\dfrac{c(t)\theta}{t}G_1(s)+\left(\dfrac{c(t)\theta}{t}\right)^2 G_2(s)+\epsilon_t(s)\right)\right)-\dfrac{\mu\theta t}{c(t)}\\
    &=\dfrac{\nu\theta}{c(t)}\sum^{t-1}_{s=0}G_1(s)-\dfrac{\mu\theta t}{c(t)} + \dfrac{\nu\theta^2}{t}\sum^{t-1}_{s=0}G_2(s) + O\left(\left(\dfrac{c(t)}{t^2}\right)\right).
\end{align*}

Now let's compute $\sum^{t-1}_{s=0}G_1(s)$. By equation~\eqref{eq:G1},
\begin{align*}
    \sum^{t-1}_{s=0}G_1(s)&=\sum^{t-1}_{s=1}G_1(s) + 1
    = 1+\sum^{t-1}_{s=1} 1 +\sum^{t-1}_{s=1}\mathbb{E}\left[\ell_{1,1}\sum^{s}_{i=1}\alpha(i)G_{1}(s-i)\right]\\
    &= t + \sum^{t-1}_{s=1}\mathbb{E}\left[\ell_{1,1}\sum^{s}_{i=1}\alpha(i)G_{1}(s-i)\right]\\
    &= t + \mathbb{E}\left[\ell_{1,1}\sum^{t-1}_{i=1}\sum^{t-1}_{s=i}\alpha(i)G_{1}(s-i)\right] \\
    &= t + \mathbb{E}\left[\ell_{1,1}\sum^{t-1}_{i=1}\sum^{t-1-i}_{j=0}\alpha(i)G_{1}(j)\right]\\
    &= t + \mathbb{E}\left[\ell_{1,1}\sum^{t-1}_{i=1}\alpha(i)\left(\sum^{t-1}_{j=0}G_{1}(j)-\sum^{t-1}_{j=t-i}G_{1}(j)\right)\right].
\end{align*}
After rewriting the above equation, we get
\begin{align*}
    \sum^{t-1}_{s=0}G_1(s)
    -\mathbb{E}\left[\ell_{1,1}\sum^{t-1}_{i=1}\alpha(i)\sum^{t-1}_{j=0}G_{1}(j)\right]
    &= t -\mathbb{E}\left[\ell_{1,1}\sum^{t-1}_{i=1}\alpha(i)\sum^{t-1}_{j=t-i}G_{1}(j)\right]\\
    \sum^{t-1}_{s=0}G_1(s) &= \dfrac{t -\mathbb{E}\left[\ell_{1,1}\sum^{t-1}_{i=1}\alpha(i)\sum^{t-1}_{j=t-i}G_{1}(j)\right]}{1-\mathbb{E}\left[\ell_{1,1}\sum^{t-1}_{i=1}\alpha(i)\right]}.
\end{align*}
Hence, 
\begin{align}\label{eq:limterm1}
    \dfrac{\nu\theta}{c(t)}\sum^{t-1}_{s=0}G_1(s)-\dfrac{\mu\theta t}{c(t)}
    &=\dfrac{\nu\theta}{c(t)}\left(\dfrac{t -\mathbb{E}\left[\ell_{1,1}\sum^{t-1}_{i=1}\alpha(i)\sum^{t-1}_{j=t-i}G_{1}(j)\right]}{1-\mathbb{E}\left[\ell_{1,1}\sum^{t-1}_{i=1}\alpha(i)\right]}-\dfrac{t}{1-\Vert\alpha\Vert_{1}\mathbb{E}[\ell_{1,1}]}\right) \nonumber\\
    &= \dfrac{\nu\theta}{c(t)}\left(\dfrac{t}{1-\mathbb{E}\left[\ell_{1,1}\sum^{t-1}_{i=1}\alpha(i)\right]}-\dfrac{t}{1-\Vert\alpha\Vert_{1}\mathbb{E}[\ell_{1,1}]}\right) \nonumber\\
    &\quad-\dfrac{\nu\theta}{c(t)}\dfrac{\mathbb{E}\left[\ell_{1,1}\sum^{t-1}_{i=1}\alpha(i)\sum^{t-1}_{j=t-i}G_{1}(j)\right]}{1-\mathbb{E}\left[\ell_{1,1}\sum^{t-1}_{i=1}\alpha(i)\right]}.
\end{align}
For the first term in equation~\eqref{eq:limterm1}, we can compute
\begin{align*}
    &\left|\dfrac{\nu\theta}{c(t)}\left(
    \dfrac{t}{1-\mathbb{E}\left[\ell_{1,1}\sum^{t-1}_{i=1}\alpha(i)\right]}-\dfrac{t}{1-\Vert\alpha\Vert_{1}\mathbb{E}[\ell_{1,1}]}\right)
    \right|\\
    &= \left|\dfrac{\nu\theta t}{c(t)}
    \dfrac{-\Vert\alpha\Vert_{1}\mathbb{E}[\ell_{1,1}]+\mathbb{E}\left[\ell_{1,1}\sum^{t-1}_{i=1}\alpha(i)\right]}{\left(1-\mathbb{E}\left[\ell_{1,1}\sum^{t-1}_{i=1}\alpha(i)\right]\right)\left(1-\Vert\alpha\Vert_{1}\mathbb{E}[\ell_{1,1}]\right)}
    \right|\\
    &\le \left|\dfrac{\nu\theta t}{c(t)}
    \dfrac{\mathbb{E}\left[\ell_{1,1}\sum^{\infty}_{i=t}\alpha(i)\right]}{\left(1-\Vert\alpha\Vert_{1}\mathbb{E}[\ell_{1,1}]\right)^2}
    \right|=\dfrac{\nu t}{c(t)}\left|
    \dfrac{\theta\mathbb{E}\left[\ell_{1,1}\sum^{\infty}_{i=t}\alpha(i)\right]}{\left(1-\Vert\alpha\Vert_{1}\mathbb{E}[\ell_{1,1}]\right)^2}
    \right|.
\end{align*}
According to the assumption $\sup_{t>0}t^{3/2}\alpha(t) = C<\infty$, we can find $\sum^{\infty}_{i=t}\alpha(i)\le \sum^{\infty}_{i=t}C i^{-3/2}< 2C (t-1)^{-1/2}$. Thus,
\begin{align*}
    \dfrac{\nu t}{c(t)}\left|
    \dfrac{\theta\mathbb{E}\left[\ell_{1,1}\sum^{\infty}_{i=t}\alpha(i)\right]}{\left(1-\Vert\alpha\Vert_{1}\mathbb{E}[\ell_{1,1}]\right)^2}
    \right|
    \le 
    \dfrac{\nu C}{c(t)}\left|
    \dfrac{\theta\mathbb{E}\left[\ell_{1,1}\right](\dfrac{2t}{\sqrt{t-1}})}{\left(1-\Vert\alpha\Vert_{1}\mathbb{E}[\ell_{1,1}]\right)^2}
    \right|\to 0, \text{\ as\ } t\to \infty.
\end{align*}
By Lemma~\ref{lemma:limG1}, $G_1(t)$ is uniformly bounded. Then for the second term in equation~\eqref{eq:limterm1},
\begin{align*}
    \limsup_{t\to\infty}\left|\dfrac{\nu\theta}{c(t)}\dfrac{\mathbb{E}\left[\ell_{1,1}\sum^{t-1}_{i=1}\alpha(i)\sum^{t-1}_{j=t-i}G_{1}(j)\right]}{1-\mathbb{E}\left[\ell_{1,1}\sum^{t-1}_{i=1}\alpha(i)\right]}\right|
    &\le G_1(\infty)\limsup_{t\to\infty}\dfrac{\nu\left|\theta\right|}{c(t)}\dfrac{\mathbb{E}\left[\ell_{1,1}\sum^{t-1}_{i=1}(i-1)\alpha(i)\right]}{1-\Vert\alpha\Vert_{1}\mathbb{E}[\ell_{1,1}]}\\
    &= G_1(\infty)\limsup_{t\to\infty}\dfrac{\nu\left|\theta\right|}{c(t)}\dfrac{\sum^{t-1}_{i=1}\mathbb{E}\left[\ell_{1,1}(i-1)\alpha(i)\right]}{1-\Vert\alpha\Vert_{1}\mathbb{E}[\ell_{1,1}]}.
\end{align*}
And we can compute
\begin{align*}
    \limsup_{t\to\infty}\dfrac{\nu\left|\theta\right|}{c(t)}\dfrac{\sum^{t-1}_{i=1}\mathbb{E}\left[\ell_{1,1}(i-1)\alpha(i)\right]}{1-\Vert\alpha\Vert_{1}\mathbb{E}[\ell_{1,1}]}
    &=\limsup_{t\to\infty}\dfrac{\nu\left|\theta\right|}{c(t)}\dfrac{\sum^{t-1}_{i=1}\mathbb{E}\left[\ell_{1,1}(i-1)\alpha(i)\right]}{1-\Vert\alpha\Vert_{1}\mathbb{E}[\ell_{1,1}]}\\
    &\le \limsup_{t\to\infty}\dfrac{\nu\left|\theta\right|}{c(t)}\dfrac{\sum^{t-1}_{i=1}\mathbb{E}\left[\ell_{1,1}\dfrac{C}{\sqrt{i}}\right]}{1-\Vert\alpha\Vert_{1}\mathbb{E}[\ell_{1,1}]}\\
    &\le \limsup_{t\to\infty}\dfrac{\dfrac{2C\sqrt{t}\nu\left|\theta\right|}{c(t)}\mathbb{E}\left[\ell_{1,1}\right]}{1-\Vert\alpha\Vert_{1}\mathbb{E}[\ell_{1,1}]}=0
\end{align*}
Hence,
\begin{align*}
   \lim_{t\to\infty}\left[ \dfrac{\nu\theta}{c(t)}\sum^{t-1}_{s=0}G_1(s)-\dfrac{\mu\theta t}{c(t)}\right]
    &=0.
\end{align*}
Furthermore, we also get 
\begin{align}
\label{eq:limG1}
    \lim_{t\to\infty}\sum^{t-1}_{s=0}\dfrac{1}{t}G_1(s)=\dfrac{1}{1-\mathbb{E}\left[\ell_{1,1}\right]\Vert\alpha\Vert_1}.
\end{align}

According to Lemma~\ref{lemma:limG2}, $G_2(t)$ is uniformly bounded in $t$. Then we can compute
\begin{align}
\label{eq:limG2}
    \lim_{t\to\infty} \dfrac{1}{t}\sum^{t-1}_{s=0}G_2(s)
    &=\dfrac{1}{2}\dfrac{1+2\left(\dfrac{1}{1-\mathbb{E}\left[\ell_{1,1}\right]\Vert\alpha\Vert_1}-1\right)+\left(\dfrac{\mathbb{E}\left[\left(\ell_{1,1}\sum^{t}_{i=1}\alpha(i)\right)^2\right]}{(1-\mathbb{E}\left[\ell_{1,1}\right]\Vert\alpha\Vert_1)^2}\right)}{1-\mathbb{E}\left[\ell_{1,1}\right]\Vert\alpha\Vert_1}\nonumber\\
    &=\dfrac{1+\text{Var}(\ell_{1,1})\Vert\alpha\Vert^2_1}{2\left(1-\mathbb{E}\left[\ell_{1,1}\right]\Vert\alpha\Vert_1\right)^3}.
\end{align}
Now we can have
\begin{align}
    \lim_{t\to\infty} \dfrac{\nu\theta^2}{t}\sum^{t-1}_{s=0}G_2(s)
    &=\dfrac{\nu\theta^2\left(1+\text{Var}(\ell_{1,1})\Vert\alpha\Vert^2_1\right)}{2\left(1-\mathbb{E}\left[\ell_{1,1}\right]\Vert\alpha\Vert_1\right)^3}.
\end{align}

Thus, we can prove
\begin{align*}
    \lim_{t\to\infty} \dfrac{t}{c^2(t)}\log{\mathbb{E}\left[e^{\frac{c(t)}{t}\theta (N_t-\mu t)}\right]} 
    &=\dfrac{\nu\theta^2\left(1+\text{Var}(\ell_{1,1})\Vert\alpha\Vert^2_1\right)}{2\left(1-\mathbb{E}\left[\ell_{1,1}\right]\Vert\alpha\Vert_1\right)^3}.
\end{align*}

Applying the G\"{a}rtner-Ellis theorem, we conclude that, for any Borel set $A$,
\begin{align}
    -\inf_{x\in A^{o}} J(x)
    &\le \liminf_{t\to\infty}\dfrac{t}{c^2(t)}\log{\mathbb{P}\left(\dfrac{N_t-\mu t}{c(t)}\in A\right)} \nonumber \\
    &\le \limsup_{t\to\infty}\dfrac{t}{c^2(t)}\log{\mathbb{P}\left(\dfrac{N_t-\mu t}{c(t)}\in A\right)}\le -\inf_{x\in\Bar{A}}J(x),
\end{align}
where 
\begin{equation}
    J(x)=\sup_{\theta\in\mathbb{R}}\left\{\theta x-\dfrac{\nu\theta^2\left(1+\text{Var}(\ell_{1,1})\Vert\alpha\Vert^2_1\right)}{2\left(1-\mathbb{E}\left[\ell_{1,1}\Vert\alpha\Vert_1\right]\right)^3}\right\}
    =\dfrac{x^2\left(1-\mathbb{E}\left[\ell_{1,1}\Vert\alpha\Vert_1\right]\right)^3}{2\nu \left(1+\text{Var}(\ell_{1,1})\Vert\alpha\Vert^2_1\right)}.
\end{equation}
\end{proof}

\begin{proof}[Proof of Theorem~\ref{Thm:LMDP}]
First, let's prove  
$$\lim_{t\to \infty} \dfrac{t}{c^2(t)}\log{\mathbb{E}\left[e^{\frac{c(t)}{t}\theta (L_t-\Tilde{\mu} t)}\right]}=\dfrac{\nu\theta^2\mathbb{E}\left[\ell^2_{1,1}\right]}{2\left(1-\mathbb{E}\left[\ell_{1,1}\right]\Vert\alpha\Vert_1\right)^3},$$
where $\Tilde{\mu}$ is defined in equation~\eqref{eq:lln}.
\comment{
We used the fact that $X_t$ is compound Poisson with intensity $\lambda_t$ conditional on $\mathcal{F}_{t-1}$. And by the definition of $\lambda_t$, we have
\begin{align*}
    \mathbb{E}\left[e^{\frac{c(t)}{t}\theta L_t}\right] 
    &= \mathbb{E}\left[e^{\frac{c(t)}{t}\theta L_{t-1}+\frac{c(t)}{t}\theta X_{t}}\right]
    =\mathbb{E}\left[e^{\frac{c(t)}{t}\theta L_{t-1}+\left(\mathbb{E}\left[e^{\frac{c(t)}{t}\theta\ell_{1,1}}\right]-1 \right)\lambda_{t}}\right]\\
    &=\mathbb{E}\left[e^{\frac{c(t)}{t}\theta L_{t-1}+\left(\mathbb{E}\left[e^{\frac{c(t)}{t}\theta\ell_{1,1}}\right]-1 \right)\nu+\left(\mathbb{E}\left[e^{\frac{c(t)}{t}\theta\ell_{1,1}}\right]-1 \right)\sum^{t-1}_{s=1}\alpha(s)X_{t-s}}\right] \\
    &= e^{\left(\mathbb{E}\left[e^{\frac{c(t)}{t}\theta\ell_{1,1}}\right]-1 \right)\nu}\mathbb{E}\left[e^{\frac{c(t)}{t}\theta L_{t-1}
    +\left(\mathbb{E}\left[e^{\frac{c(t)}{t}\theta\ell_{1,1}}\right]-1 \right)\alpha(1)X_{t-1}
    +\left(\mathbb{E}\left[e^{\frac{c(t)}{t}\theta\ell_{1,1}}\right]-1 \right)\sum^{t-1}_{s=2}\alpha(s)X_{t-s}}\right]\\
    &= e^{\left(\mathbb{E}\left[e^{\frac{c(t)}{t}\theta\ell_{1,1}}\right]-1 \right)\nu}\mathbb{E}\left[e^{\frac{c(t)}{t}\theta L_{t-2}
    +\left(\frac{c(t)}{t}\theta+\left(\mathbb{E}\left[e^{\frac{c(t)}{t}\theta\ell_{1,1}}\right]-1 \right)\alpha(1)\right)X_{t-1}}\right.\\
    &\cdot\left. e^{\left(\mathbb{E}\left[e^{\frac{c(t)}{t}\theta\ell_{1,1}}\right]-1 \right)\sum^{t-1}_{s=2}\alpha(s)X_{t-s}}\right]\\
    &=e^{\left(\mathbb{E}\left[e^{\frac{c(t)}{t}\theta\ell_{1,1}}\right]-1 \right)\nu}\mathbb{E}\left[e^{\frac{c(t)}{t}\theta L_{t-2}
    +\left(\mathbb{E}\left[e^{\left(\frac{c(t)}{t}\theta+\left(\mathbb{E}\left[e^{\frac{c(t)}{t}\theta\ell_{1,1}}\right]-1 \right)\alpha(1)\right)\ell_{1,1}}\right]-1\right)\lambda_{t-1}}\right.\\
    &\cdot\left. e^{\left(\mathbb{E}\left[e^{\frac{c(t)}{t}\theta\ell_{1,1}}\right]-1 \right)\sum^{t-1}_{s=2}\alpha(s)X_{t-s}}\right]\\
\end{align*}
By induction on t,}

As the proof of Theorem~\ref{Thm:LLD} shows, we can find
\begin{align}\label{eq:char Lt}
    \mathbb{E}[e^{\frac{c(t)\theta}{t}L_t}]=e^{\nu\left(\left(g_0(\theta_t)-1\right)+\cdots+\left(g_{t-1}(\theta_t)-1\right)\right)}=e^{\nu\sum^{t-1}_{s=0}(g_s(\theta_t)-1)},
\end{align}
where $\theta_t=\frac{c(t)}{t}\theta$, $g_0(\theta_t)=\mathbb{E}\left[e^{\theta_t\ell_{1,1}}\right]$, 
$g_1(\theta_t)=\mathbb{E}\left[e^{\left(\theta_t+\left(g_0(\theta_t)-1 \right)\alpha(1)\right)\ell_{1,1}}\right]$, and in general for every $s\geq 1$,
\begin{align}
	g_{s}(\theta_t)
	&=\mathbb{E}\left[e^{(\theta_t+(g_{s-1}(\theta_t)-1)\alpha(1)+(g_{s-2}(\theta_t)-1)\alpha(2)
		+\cdots+(g_{0}(\theta_t)-1)\alpha(s))\ell_{1,1}}\right] \nonumber\\
	&=\mathbb{E}\left[e^{(\theta_t+\sum^{s}_{i=1}(g_{i-1}(\theta_t)-1)\alpha(i))\ell_{1,1}}\right].
\end{align}
Thus, $\Tilde{G}_t(0)=\mathbb{E}\left[e^{\theta_t\ell_{1,1}}\right]-1$, 
$\Tilde{G}_t(s)=
    \mathbb{E}\left[e^{\left(\theta_t+\sum^{s}_{i=1}\alpha(i)\Tilde{G}_t(s-i)\right)\ell_{1,1}}\right]
    -1$ for $s\ge 1$.

Because of $\dfrac{c(t)}{t}$, we write $\Tilde{G}_t(s)$ instead of $\Tilde{G}(s)$ to indicate its dependence on t. 
According to the proof of Theorem~\ref{Thm:LLD}, $\dfrac{c(t)}{t}$ is sufficient small so that we have $\dfrac{c(t)}{t}\theta\le \theta_c$, where $\theta_c$ satisfies $\mathbb{E}\left[\Vert\alpha\Vert_1\ell_{1,1}e^{\theta_c\ell_{1,1}+\Vert\alpha\Vert_1\ell_{1,1}(x_c-1)}\right]=1$ 
and as $s\to \infty$, we get that 
$\Tilde{G}_t(\infty)$ is the minimal solution to the equation $\Tilde{x}_t= \mathbb{E}\left[e^{\frac{c(t)}{t}\theta\ell_{1,1}+\ell_{1,1}\sum^{\infty}_{i=1}\alpha(i)\Tilde{x}_t}\right]-1$. Because $\mathbb{E}\left[\ell_{1,1}\right]\Vert\alpha\Vert_{1}<1$, it is easy to see that $\Tilde{x}_t=O((c(t)/t))$. Because $\Tilde{x}_t=O((c(t)/t))$, we have $\Tilde{G}_t(s)=O((c(t)/t))$ uniformly in s. By Taylor's expansion,
\begin{align}\label{eq:taylor LGt}
    \Tilde{G}_t(s)&=\dfrac{c(t)\theta}{t}\mathbb{E}\left[\ell_{1,1}\right] + \mathbb{E}[\ell_{1,1}\sum^{s}_{i=1}\alpha(i)\Tilde{G}_{t}(s-i)] \nonumber\\
    &\quad+\dfrac{1}{2}\left(\dfrac{c(t)\theta}{t}\right)^2\mathbb{E}\left[\ell^2_{1,1}\right]+\dfrac{1}{2}\mathbb{E}\left[\left(\ell_{1,1}\sum^{s}_{i=1}\alpha(i)\Tilde{G}_{t}(s-i)\right)^2\right] \nonumber\\
    &\quad+\dfrac{c(t)\theta}{t}\mathbb{E}[\ell^2_{1,1}\sum^{s}_{i=1}\alpha(i)\Tilde{G}_{t}(s-i)]+O\left(\left(\dfrac{c(t)}{t}\right)^3\right).
\end{align}
We can let
\begin{equation}\label{eq:LGtG1G2}
    \Tilde{G}_t(s)=\dfrac{c(t)\theta}{t}\Tilde{G}_1(s)+\left(\dfrac{c(t)\theta}{t}\right)^2 \Tilde{G}_2(s)+\epsilon_t(s),
\end{equation}
where $G_1(s)$ satisfies
\begin{align}\label{eq:LG1}
    \Tilde{G}_1(s) = \mathbb{E}\left[\ell_{1,1}\right] + \mathbb{E}[\ell_{1,1}\sum^{s}_{i=1}\alpha(i)\Tilde{G}_{1}(s-i)],
\end{align}
$\Tilde{G}_1(0)=\mathbb{E}\left[\ell_{1,1}\right]$ 
and $\Tilde{G}_2(s)$ satisfies
\begin{align}\label{eq:LG2}
    \Tilde{G}_2(s) 
    &= \mathbb{E}[\ell_{1,1}\sum^{s}_{i=1}\alpha(i)\Tilde{G}_{2}(s-i)] + \dfrac{\mathbb{E}\left[\ell^2_{1,1}\right]}{2} \\
    &\quad+\mathbb{E}[\ell^2_{1,1}\sum^{s}_{i=1}\alpha(i)\Tilde{G}_{1}(s-i)]
    +\dfrac{1}{2}\mathbb{E}\left[\left(\ell_{1,1}\sum^{s}_{i=1}\alpha(i)\Tilde{G}_{1}(s-i)\right)^2\right],
\end{align}
$\Tilde{G}_2(0)=\dfrac{\mathbb{E}\left[\ell^2_{1,1}\right]}{2}$. Then we can substitute $\Tilde{G}_t(s)$ in terms of equation~\eqref{eq:LG1} and equation~\eqref{eq:LG2} into the left and right side of equation~\eqref{eq:taylor LGt} and we find $\epsilon_t(s)=O\left(\left(\dfrac{c(t)}{t}\right)^3\right)$.

By equation~\eqref{eq:char Lt},
\begin{align*}
    \dfrac{t}{c^2(t)}\log{\mathbb{E}\left[e^{\frac{c(t)}{t}\theta (L_t-\Tilde{\mu} t)}\right]} =\dfrac{t}{c^2(t)}\left( \nu\sum^{t-1}_{s=0}\Tilde{G}_t(s)\right)-\dfrac{\Tilde{\mu}\theta t}{c(t)},
\end{align*}
Then by equation~\eqref{eq:LGtG1G2}, we can rewrite the above equation,
\begin{align*}
    \dfrac{t}{c^2(t)}\log{\mathbb{E}\left[e^{\frac{c(t)}{t}\theta (L_t-\Tilde{\mu} t)}\right]} 
    &=\dfrac{t}{c^2(t)}\left( \nu\sum^{t-1}_{s=0}\left(\dfrac{c(t)\theta}{t}\Tilde{G}_1(s)+\left(\dfrac{c(t)\theta}{t}\right)^2 \Tilde{G}_2(s)+\epsilon_t(s)\right)\right)-\dfrac{\Tilde{\mu}\theta t}{c(t)}\\
    &=\dfrac{\nu\theta}{c(t)}\sum^{t-1}_{s=0}\Tilde{G}_1(s)-\dfrac{\Tilde{\mu}\theta t}{c(t)} + \dfrac{\nu\theta^2}{t}\sum^{t-1}_{s=0}\Tilde{G}_2(s) + O\left(\left(\dfrac{c(t)}{t^2}\right)\right).
\end{align*}

Now let's compute $\sum^{t-1}_{s=0}\Tilde{G}_1(s)$. By equation~\eqref{eq:LG1},
\begin{align*}
    \sum^{t-1}_{s=0}\Tilde{G}_1(s)
    &=\sum^{t-1}_{s=1}\Tilde{G}_1(s) + \mathbb{E}\left[\ell_{1,1}\right]
    = \mathbb{E}\left[\ell_{1,1}\right]+\sum^{t-1}_{s=1} \mathbb{E}\left[\ell_{1,1}\right] +\sum^{t-1}_{s=1}\mathbb{E}\left[\ell_{1,1}\sum^{s}_{i=1}\alpha(i)\Tilde{G}_{1}(s-i)\right]\\
    &= t\mathbb{E}\left[\ell_{1,1}\right] + \sum^{t-1}_{s=1}\mathbb{E}\left[\ell_{1,1}\sum^{s}_{i=1}\alpha(i)\Tilde{G}_{1}(s-i)\right]\\
    &= t\mathbb{E}\left[\ell_{1,1}\right] + \mathbb{E}\left[\ell_{1,1}\sum^{t-1}_{i=1}\sum^{t-1}_{s=i}\alpha(i)\Tilde{G}_{1}(s-i)\right] \\
    &= t\mathbb{E}\left[\ell_{1,1}\right] + \mathbb{E}\left[\ell_{1,1}\sum^{t-1}_{i=1}\sum^{t-1-i}_{j=0}\alpha(i)\Tilde{G}_{1}(j)\right]\\
    &= t\mathbb{E}\left[\ell_{1,1}\right] + \mathbb{E}\left[\ell_{1,1}\sum^{t-1}_{i=1}\alpha(i)\left(\sum^{t-1}_{j=0}\Tilde{G}_{1}(j)-\sum^{t-1}_{j=t-i}\Tilde{G}_{1}(j)\right)\right].
\end{align*}
After rewriting the above equation, we get
\begin{align*}
    \sum^{t-1}_{s=0}\Tilde{G}_1(s)
    -\mathbb{E}\left[\ell_{1,1}\sum^{t-1}_{i=1}\alpha(i)\sum^{t-1}_{j=0}\Tilde{G}_{1}(j)\right]
    &= t\mathbb{E}\left[\ell_{1,1}\right] -\mathbb{E}\left[\ell_{1,1}\sum^{t-1}_{i=1}\alpha(i)\sum^{t-1}_{j=t-i}\Tilde{G}_{1}(j)\right]\\
    \sum^{t-1}_{s=0}\Tilde{G}_1(s) &= \dfrac{t\mathbb{E}\left[\ell_{1,1}\right] -\mathbb{E}\left[\ell_{1,1}\sum^{t-1}_{i=1}\alpha(i)\sum^{t-1}_{j=t-i}\Tilde{G}_{1}(j)\right]}{1-\mathbb{E}\left[\ell_{1,1}\sum^{t-1}_{i=1}\alpha(i)\right]}.
\end{align*}
And we can compute
\begin{align}\label{eq:limLterm1}
    \dfrac{\nu\theta}{c(t)}\sum^{t-1}_{s=0}\Tilde{G}_1(s)-\dfrac{\Tilde{\mu}\theta t}{c(t)}
    &=\dfrac{\nu\theta}{c(t)}\left(\dfrac{t\mathbb{E}\left[\ell_{1,1}\right] -\mathbb{E}\left[\ell_{1,1}\sum^{t-1}_{i=1}\alpha(i)\sum^{t-1}_{j=t-i}\Tilde{G}_{1}(j)\right]}{1-\mathbb{E}\left[\ell_{1,1}\sum^{t-1}_{i=1}\alpha(i)\right]}-\dfrac{t\mathbb{E}\left[\ell_{1,1}\right]}{1-\Vert\alpha\Vert_{1}\mathbb{E}[\ell_{1,1}]}\right) \nonumber\\
    &= \dfrac{\nu\theta}{c(t)}\left(\dfrac{t\mathbb{E}\left[\ell_{1,1}\right]}{1-\mathbb{E}\left[\ell_{1,1}\sum^{t-1}_{i=1}\alpha(i)\right]}-\dfrac{t\mathbb{E}\left[\ell_{1,1}\right]}{1-\Vert\alpha\Vert_{1}\mathbb{E}[\ell_{1,1}]}\right) \nonumber\\
    &\quad-\dfrac{\nu\theta}{c(t)}\dfrac{\mathbb{E}\left[\ell_{1,1}\sum^{t-1}_{i=1}\alpha(i)\sum^{t-1}_{j=t-i}\Tilde{G}_{1}(j)\right]}{1-\mathbb{E}\left[\ell_{1,1}\sum^{t-1}_{i=1}\alpha(i)\right]}.
\end{align}
For the first term on the right hand side of the equation~\eqref{eq:limLterm1}, we can compute
\begin{align*}
    &\left|\dfrac{\nu\theta}{c(t)}\left(
    \dfrac{t\mathbb{E}\left[\ell_{1,1}\right]}{1-\mathbb{E}\left[\ell_{1,1}\sum^{t-1}_{i=1}\alpha(i)\right]}-\dfrac{t\mathbb{E}\left[\ell_{1,1}\right]}{1-\Vert\alpha\Vert_{1}\mathbb{E}[\ell_{1,1}]}\right)
    \right|\\
    &= \left|\dfrac{\nu\theta t\mathbb{E}\left[\ell_{1,1}\right]}{c(t)}
    \dfrac{-\Vert\alpha\Vert_{1}\mathbb{E}[\ell_{1,1}]+\mathbb{E}\left[\ell_{1,1}\sum^{t-1}_{i=1}\alpha(i)\right]}{\left(1-\mathbb{E}\left[\ell_{1,1}\sum^{t-1}_{i=1}\alpha(i)\right]\right)\left(1-\Vert\alpha\Vert_{1}\mathbb{E}[\ell_{1,1}]\right)}
    \right|\\
    &\le \left|\dfrac{\nu\theta t\mathbb{E}\left[\ell_{1,1}\right]}{c(t)}
    \dfrac{\mathbb{E}\left[\ell_{1,1}\sum^{\infty}_{i=t}\alpha(i)\right]}{\left(1-\Vert\alpha\Vert_{1}\mathbb{E}[\ell_{1,1}]\right)^2}
    \right|=\dfrac{\nu t}{c(t)}\left|
    \dfrac{\theta\mathbb{E}\left[\ell_{1,1}\sum^{\infty}_{i=t}\alpha(i)\right]}{\left(1-\Vert\alpha\Vert_{1}\mathbb{E}[\ell_{1,1}]\right)^2}
    \right|.
\end{align*}
According to the assumption $\sup_{t>0}t^{3/2}\alpha(t) = C<\infty$, we get $\sum^{\infty}_{i=t}\alpha(i)\le \sum^{\infty}_{i=t}C i^{-3/2}< 2C (t-1)^{-1/2}$. Therefore,
\begin{align*}
    \dfrac{\nu t\mathbb{E}\left[\ell_{1,1}\right]}{c(t)}\left|
    \dfrac{\theta\mathbb{E}\left[\ell_{1,1}\sum^{\infty}_{i=t}\alpha(i)\right]}{\left(1-\Vert\alpha\Vert_{1}\mathbb{E}[\ell_{1,1}]\right)^2}
    \right|
    \le 
    \dfrac{\nu C}{c(t)}\left|
    \dfrac{\theta\left(\mathbb{E}\left[\ell_{1,1}\right]\right)^2(\dfrac{2t}{\sqrt{t-1}})}{\left(1-\Vert\alpha\Vert_{1}\mathbb{E}[\ell_{1,1}]\right)^2}
    \right|\to 0, \text{\ as\ } t\to \infty.
\end{align*}
Next, by Lemma~\ref{lemma:limG1}, $\Tilde{G}_2(t)$ is uniformly bounded in $t$, then for the second term on the right hand side of the equation~\eqref{eq:limLterm1}, we have
\begin{align*}
    \limsup_{t\to\infty}\left|\dfrac{\nu\theta}{c(t)}\dfrac{\mathbb{E}\left[\ell_{1,1}\sum^{t-1}_{i=1}\alpha(i)\sum^{t-1}_{j=t-i}\Tilde{G}_{1}(j)\right]}{1-\mathbb{E}\left[\ell_{1,1}\sum^{t-1}_{i=1}\alpha(i)\right]}\right|
    &\le \Tilde{G}_1(\infty)\limsup_{t\to\infty}\dfrac{\nu\left|\theta\right|}{c(t)}\dfrac{\mathbb{E}\left[\ell_{1,1}\sum^{t-1}_{i=1}(i-1)\alpha(i)\right]}{1-\Vert\alpha\Vert_{1}\mathbb{E}[\ell_{1,1}]}\\
    &= \Tilde{G}_1(\infty)\limsup_{t\to\infty}\dfrac{\nu\left|\theta\right|}{c(t)}\dfrac{\sum^{t-1}_{i=1}\mathbb{E}\left[\ell_{1,1}(i-1)\alpha(i)\right]}{1-\Vert\alpha\Vert_{1}\mathbb{E}[\ell_{1,1}]}.
\end{align*}
Then we can get
\begin{align*}
    \limsup_{t\to\infty}\dfrac{\nu\left|\theta\right|}{c(t)}\dfrac{\sum^{t-1}_{i=1}\mathbb{E}\left[\ell_{1,1}(i-1)\alpha(i)\right]}{1-\Vert\alpha\Vert_{1}\mathbb{E}[\ell_{1,1}]}
    &=\limsup_{t\to\infty}\dfrac{\nu\left|\theta\right|}{c(t)}\dfrac{\sum^{t-1}_{i=1}\mathbb{E}\left[\ell_{1,1}(i-1)\alpha(i)\right]}{1-\Vert\alpha\Vert_{1}\mathbb{E}[\ell_{1,1}]}\\
    &\le \limsup_{t\to\infty}\dfrac{\nu\left|\theta\right|}{c(t)}\dfrac{\sum^{t-1}_{i=1}\mathbb{E}\left[\ell_{1,1}\dfrac{C}{\sqrt{i}}\right]}{1-\Vert\alpha\Vert_{1}\mathbb{E}[\ell_{1,1}]}\\
    &\le \limsup_{t\to\infty}\dfrac{\dfrac{2C\sqrt{t}\nu\left|\theta\right|}{c(t)}\mathbb{E}\left[\ell_{1,1}\right]}{1-\Vert\alpha\Vert_{1}\mathbb{E}[\ell_{1,1}]}=0.
\end{align*}
Therefore, we can compute
\begin{align*}
   \lim_{t\to\infty}\left[ \dfrac{\nu\theta}{c(t)}\sum^{t-1}_{s=0}\Tilde{G}_1(s)-\dfrac{\Tilde{\mu}\theta t}{c(t)}\right]
    &=0.
\end{align*}
Furthermore, we can show
\begin{align}
\label{eq:limLG1}
    \lim_{t\to\infty}\dfrac{1}{t}\sum^{t-1}_{s=0}\Tilde{G}_1(t)=\dfrac{\mathbb{E}\left[\ell_{1,1}\right]}{1-\mathbb{E}\left[\ell_{1,1}\Vert\alpha\Vert_1\right]}.
\end{align}

For $\dfrac{1}{t}\sum^{t-1}_{s=0}\Tilde{G}_2(s)$, by Lemma~\ref{lemma:limG2}, $\Tilde{G}_2(t)$ is uniformly bounded in $t$. Thus, we can compute 
\begin{align}
\label{eq:limLG2}
    \lim_{t\to\infty} \dfrac{1}{t}\sum^{t-1}_{s=0}\Tilde{G}_2(s)
    &=\dfrac{1}{2}\dfrac{\mathbb{E}\left[\ell^2_{1,1}\right]+2\left(\dfrac{\mathbb{E}\left[\ell_{1,1}\right]\Vert\alpha\Vert_1\mathbb{E}\left[\ell^2_{1,1}\right]}{1-\mathbb{E}\left[\ell_{1,1}\Vert\alpha\Vert_1\right]}\right)+\mathbb{E}\left[\ell^2_{1,1}\right]\left(\dfrac{\mathbb{E}\left[\ell_{1,1}\right]\Vert\alpha\Vert_1}{1-\mathbb{E}\left[\ell_{1,1}\Vert\alpha\Vert_1\right]}\right)^2}{1-\mathbb{E}\left[\ell_{1,1}\Vert\alpha\Vert_1\right]}\nonumber\\
    &=\dfrac{\mathbb{E}\left[\ell^2_{1,1}\right]}{2\left(1-\mathbb{E}\left[\ell_{1,1}\Vert\alpha\Vert_1\right]\right)^3}.
\end{align}
Noe we have
\begin{align}
    \lim_{t\to\infty} \dfrac{\nu\theta^2}{t}\sum^{t-1}_{s=0}\Tilde{G}_2(s)
    &=\dfrac{\nu\theta^2\mathbb{E}\left[\ell^2_{1,1}\right]}{2\left(1-\mathbb{E}\left[\ell_{1,1}\Vert\alpha\Vert_1\right]\right)^3}.
\end{align}

Thus, we can derive
\begin{align*}
    \lim_{t\to\infty} \dfrac{t}{c^2(t)}\log{\mathbb{E}\left[e^{\frac{c(t)}{t}\theta (L_t-\Tilde{\mu} t)}\right]} 
    &=\dfrac{\nu\theta^2\mathbb{E}\left[\ell^2_{1,1}\right]}{2\left(1-\mathbb{E}\left[\ell_{1,1}\Vert\alpha\Vert_1\right]\right)^3}.
\end{align*}

Applying the G\"{a}rtner-Ellis theorem see \cite{Dembo}, we conclude that, for any Borel set $A$,
\begin{align}
    -\inf_{x\in A^{o}} J(x)
    &\le \liminf_{t\to\infty}\dfrac{t}{c^2(t)}\log{\mathbb{P}\left(\dfrac{L_t-\Tilde{\mu} t}{c(t)}\in A\right)} \nonumber \\
    &\le \limsup_{t\to\infty}\dfrac{t}{c^2(t)}\log{\mathbb{P}\left(\dfrac{L_t-\Tilde{\mu} t}{c(t)}\in A\right)}\le -\inf_{x\in\Bar{A}}J(x),
\end{align}
where 
\begin{equation}
    J(x)=\sup_{\theta\in\mathbb{R}}\left\{\theta x-\dfrac{\nu\theta^2\mathbb{E}\left[\ell^2_{1,1}\right]}{2\left(1-\mathbb{E}\left[\ell_{1,1}\Vert\alpha\Vert_1\right]\right)^3}\right\}
    =\dfrac{x^2\left(1-\mathbb{E}\left[\ell_{1,1}\Vert\alpha\Vert_1\right]\right)^3}{2\nu \mathbb{E}\left[\ell^2_{1,1}\right]}.
\end{equation}
\end{proof}

\begin{lemma}\label{lemma:limG1}
For any $s\in\mathbb{N}$, 
\begin{align*}
    G_1(s)\le\dfrac{1}{1-\mathbb{E}\left[\ell_{1,1}\Vert\alpha\Vert_1\right]},
\end{align*}
where $G_1(s) = 1 + \mathbb{E}[\ell_{1,1}\sum^{s}_{i=1}\alpha(i)G_{1}(s-i)]$ for $s\ge 1$ and $G_1(0)=1$.
and 
\begin{align*}
    \Tilde{G}_1(s)\le\dfrac{\mathbb{E}\left[\ell_{1,1}\right]}{1-\mathbb{E}\left[\ell_{1,1}\Vert\alpha\Vert_1\right]},
\end{align*}
where $\Tilde{G}_1(0)=\mathbb{E}\left[\ell_{1,1}\right]$ and for $s\ge 1$, $\Tilde{G}_1(s) = \mathbb{E}\left[\ell_{1,1}\right] + \mathbb{E}[\ell_{1,1}\sum^{s}_{i=1}\alpha(i)\Tilde{G}_{1}(s-i)]$.
\end{lemma}
\begin{proof}[Proof of Lemma~\ref{lemma:limG1}]
We prove Lemma~\ref{lemma:limG1} by induction on $s$.
By assumption $\mathbb{E}\left[\ell_{1,1}\Vert\alpha\Vert_1\right]< 1$,  $G_1(0)\le\dfrac{1}{1-\mathbb{E}\left[\ell_{1,1}\Vert\alpha\Vert_1\right]}$. Now, let's assume that $G_1(s)\le\dfrac{1}{1-\mathbb{E}\left[\ell_{1,1}\Vert\alpha\Vert_1\right]}$. Then we can compute,
\begin{align*}
    G_1(s+1)
    &=1+\mathbb{E}[\ell_{1,1}\sum^{s+1}_{i=1}\alpha(i)G_{1}(s+1-i)]\\
    &\le 1+\mathbb{E}[\ell_{1,1}\sum^{s}_{i=1}\alpha(i)\dfrac{1}{1-\mathbb{E}\left[\ell_{1,1}\Vert\alpha\Vert_1\right]}]\\
    &=\dfrac{1-\mathbb{E}\left[\ell_{1,1}\Vert\alpha\Vert_1\right]+\mathbb{E}[\ell_{1,1}\sum^{s}_{i=1}\alpha(i)]}{1-\mathbb{E}\left[\ell_{1,1}\Vert\alpha\Vert_1\right]}\\
    &\le\dfrac{1}{1-\mathbb{E}\left[\ell_{1,1}\Vert\alpha\Vert_1\right]}
\end{align*}
Hence,  we  proved  that  for  every $s\in\mathbb{N}$, 
$ G_1(s)\le\dfrac{1}{1-\mathbb{E}\left[\ell_{1,1}\Vert\alpha\Vert_1\right]}$.
Similarly, we can show $\Tilde{G}_1(s)\le\dfrac{\mathbb{E}\left[\ell_{1,1}\right]}{1-\mathbb{E}\left[\ell_{1,1}\Vert\alpha\Vert_1\right]}$
\end{proof}

\begin{lemma}\label{lemma:limG2}
For any $s\in\mathbb{N}$, 
\begin{align*}
    G_2(s)\le\dfrac{1+\text{Var}(\ell_{1,1})\Vert\alpha\Vert^2_1}{2\left(1-\mathbb{E}\left[\ell_{1,1}\Vert\alpha\Vert_1\right]\right)^3},
\end{align*}
where $G_2(s) = \mathbb{E}[\ell_{1,1}\sum^{s}_{i=1}\alpha(i)G_{2}(s-i)] + \dfrac{1}{2}+(G_1(s)-1)+\dfrac{1}{2}\mathbb{E}\left[\left(\ell_{1,1}\sum^{s}_{i=1}\alpha(i)G_1(s-i)\right)^2\right]$ for $s\ge 1$ and $G_2(0)=1/2$.
And 
\begin{align*}
    \Tilde{G}_2(s)\le \dfrac{\mathbb{E}\left[\ell^2_{1,1}\right]}{2\left(1-\mathbb{E}\left[\ell_{1,1}\Vert\alpha\Vert_1\right]\right)^3}
\end{align*}
where $\Tilde{G}_2(0)=\dfrac{\mathbb{E}\left[\ell^2_{1,1}\right]}{2}$ 
and for $s\ge 1$, 
\begin{align*}
\Tilde{G}_2(s) 
    &= \mathbb{E}[\ell_{1,1}\sum^{s}_{i=1}\alpha(i)\Tilde{G}_{2}(s-i)] + \dfrac{\mathbb{E}\left[\ell^2_{1,1}\right]}{2} \\
    &\quad+\mathbb{E}[\ell^2_{1,1}\sum^{s}_{i=1}\alpha(i)\Tilde{G}_{1}(s-i)]
    +\dfrac{1}{2}\mathbb{E}\left[\left(\ell_{1,1}\sum^{s}_{i=1}\alpha(i)\Tilde{G}_{1}(s-i)\right)^2\right].
\end{align*}

\end{lemma}

\begin{proof}[Proof of Lemma~\ref{lemma:limG2}]
We prove Lemma~\ref{lemma:limG2} by induction on s.
By assumption $\mathbb{E}\left[\ell_{1,1}\Vert\alpha\Vert_1\right]< 1$,  it is not hard to see $G_2(0)\le\dfrac{1+\text{Var}(\ell_{1,1})\Vert\alpha\Vert^2_1}{2\left(1-\mathbb{E}\left[\ell_{1,1}\Vert\alpha\Vert_1\right]\right)^3}$. 

Now, let's assume that $G_2(s)\le\dfrac{1+\text{Var}(\ell_{1,1})\Vert\alpha\Vert^2_1}{2\left(1-\mathbb{E}\left[\ell_{1,1}\Vert\alpha\Vert_1\right]\right)^3}$. Then we can compute,
\begin{align*}
    G_2(s+1)
    &=\mathbb{E}[\ell_{1,1}\sum^{s+1}_{i=1}\alpha(i)G_{2}(s+1-i)] + \dfrac{1}{2}+(G_1(s)-1)\\
    &\quad+\dfrac{1}{2}\mathbb{E}\left[\left(\ell_{1,1}\sum^{s+1}_{i=1}\alpha(i)G_1(s+1-i)\right)^2\right]\\
    &\le \mathbb{E}\left[\ell_{1,1}\sum^{s+1}_{i=1}\alpha(i)\dfrac{1+\text{Var}(\ell_{1,1})\Vert\alpha\Vert^2_1}{2\left(1-\mathbb{E}\left[\ell_{1,1}\Vert\alpha\Vert_1\right]\right)^3}\right] + \dfrac{1}{2}+\dfrac{\mathbb{E}\left[\ell_{1,1}\Vert\alpha\Vert_1\right]}{1-\mathbb{E}\left[\ell_{1,1}\Vert\alpha\Vert_1\right]}\\
    &\quad+\dfrac{1}{2}\mathbb{E}\left[\left(\ell_{1,1}\sum^{s}_{i=1}\alpha(i)\dfrac{1}{1-\mathbb{E}\left[\ell_{1,1}\Vert\alpha\Vert_1\right]}\right)^2\right]\\
    &\le \dfrac{1+\text{Var}(\ell_{1,1})\Vert\alpha\Vert^2_1}{2\left(1-\mathbb{E}\left[\ell_{1,1}\Vert\alpha\Vert_1\right]\right)^3}\mathbb{E}\left[\ell_{1,1}\Vert\alpha\Vert_1\right] + \dfrac{1}{2}+\dfrac{\mathbb{E}\left[\ell_{1,1}\Vert\alpha\Vert_1\right]}{1-\mathbb{E}\left[\ell_{1,1}\Vert\alpha\Vert_1\right]}\\
    &\quad+\dfrac{\mathbb{E}\left[\ell^2_{1,1}\right]\Vert\alpha\Vert^2_1}{2\left(1-\mathbb{E}\left[\ell_{1,1}\Vert\alpha\Vert_1\right]\right)^2}\\
    &\le\dfrac{1+\text{Var}(\ell_{1,1})\Vert\alpha\Vert^2_1}{2\left(1-\mathbb{E}\left[\ell_{1,1}\Vert\alpha\Vert_1\right]\right)^3}.
\end{align*}
Hence,  we  proved  that  for  every $s\in\mathbb{N}$, 
$ G_2(s)\le\dfrac{1+\text{Var}(\ell_{1,1})\Vert\alpha\Vert^2_1}{2\left(1-\mathbb{E}\left[\ell_{1,1}\Vert\alpha\Vert_1\right]\right)^3}$. And similarly, we can also show that $\Tilde{G}_2(s)\le \dfrac{\mathbb{E}\left[\ell^2_{1,1}\right]}{2\left(1-\mathbb{E}\left[\ell_{1,1}\Vert\alpha\Vert_1\right]\right)^3}$.
\end{proof}
\newpage
\bibliographystyle{alpha}
\bibliography{mybibfile}

\end{document}